\documentclass[11pt]{article}
\usepackage[a4paper, margin=1in]{geometry}

\usepackage{amsmath,amssymb,amsfonts,amsthm}
\allowdisplaybreaks[4]

\usepackage{graphicx}
\usepackage{pdflscape}

\usepackage{array}
\usepackage{booktabs}
\usepackage{multirow}
\usepackage{longtable}
\usepackage{supertabular} 
\usepackage{adjustbox}

\usepackage{algorithm}
\usepackage{algorithmic}

\usepackage{listings}
\usepackage{enumitem}
\usepackage{verbatim}
\usepackage{placeins}
\usepackage{cases}
\usepackage{empheq}
\usepackage{tikz}
\usepackage{textcomp}
\usepackage{stfloats} 

\usepackage{cite}

\definecolor{dred}{rgb}{0.8,0.2,0.0}

\usepackage{url}
\usepackage[colorlinks=true, linkcolor=blue, citecolor=blue, urlcolor=blue]{hyperref}
\usepackage{orcidlink}
\usepackage{balance}

\newtheorem{theorem}{Theorem}
\newtheorem{assumption}{Assumption}

\newtheorem{remark}{Remark}


\begin{document}
\title{
Graph-Guided Fused Regularization for Single- and Multi-Task  Regression on Spatiotemporal Data\\
}

\author{Meixia Lin\thanks{Engineering Systems and Design, Singapore University of Technology and Design, Singapore (e-mail: meixia\_lin@sutd.edu.sg).} \and
        Ziyang Zeng\thanks{Department of Industrial Systems Engineering and Management, National University of Singapore (e-mail: ziyangzeng@u.nus.edu).} \and
         Yangjing Zhang\thanks{The State Key Laboratory of Mathematical Sciences, Academy of Mathematics and Systems Science, Chinese Academy of Sciences, Beijing, China (e-mail: yangjing.zhang@amss.ac.cn).}% 
        }
\maketitle

\begin{abstract}
Spatiotemporal matrix-valued data arise frequently in modern applications, yet performing effective regression analysis remains challenging due to complex, dimension-specific dependencies. In this work, we propose a regularized framework for spatiotemporal matrix regression that characterizes temporal and spatial dependencies through tailored penalties. Specifically, the model incorporates a fused penalty to capture smooth temporal evolution and a graph-guided penalty to promote spatial similarity. The framework also extends to the multi-task setting, enabling joint estimation across related tasks. We provide a comprehensive analysis of the framework from both theoretical and computational perspectives. Theoretically, we establish the statistical consistency of the proposed estimators. Computationally, we develop an efficient solver based on the Halpern Peaceman-Rachford method for the resulting composite convex optimization problem. The proposed algorithm achieves a fast global non-ergodic $\mathcal{O}(1/k)$ convergence rate with low per-iteration  complexity. Extensive numerical experiments demonstrate that our method significantly outperforms state-of-the-art approaches in terms of predictive accuracy and estimation error, while also exhibiting superior computational efficiency and scalability.

\end{abstract}

% \begin{IEEEkeywords}
% Graph-guided regularization, 
% Halpern Peaceman–Rachford method,
% multi-task learning,
% spatiotemporal
% regression, 
% structured sparsity.
% \end{IEEEkeywords}

\section{Introduction}
\label{sec: intro}
We consider the matrix regression
model
\begin{equation}\label{eq: model, single}
y_k = \langle X_k, \theta \rangle + \varepsilon_k, \quad k \in [n]:=\{ 1,\dots,n\},
\end{equation}
where $n$ is the sample size, $y_k \in \mathbb{R}$ is the scalar response, and $\varepsilon_k$ denotes the observation noise. {The unknown regression coefficient $\theta \in \mathbb{R}^{t \times s}$ and the predictors $X_k \in \mathbb{R}^{t \times s}$ are matrices that encode spatiotemporal structures},
with rows and columns representing $t$ time lags and $s$ spatial locations, respectively, 
as commonly encountered in applications such as climate modeling (e.g., sea surface temperature data).

One line of existing approaches relies on sparsity-inducing penalties like the lasso \cite{Tibshirani1996}, structured sparsity regularizations including the fused lasso \cite{Tibshirani2005} and group lasso \cite{yuan2006model}, as well as low-rank regularization \cite{chen2023quantized}. However, these methods may fail to fully exploit the intrinsic spatiotemporal dependencies {in} the predictors.
To address this,
another line of work assumes that the regression coefficient aligns well
with the correlation structure of the predictors, and employs graph total variation
(GTV) regularization to promote similarity among highly correlated variables (see, e.g.,
\cite{li2018graph,stevens2019graph,li2020graph,stevens2021graph}). These approaches typically treat time lags and spatial locations jointly and rely heavily on estimating a $ts\times ts$ covariance (cf. \eqref{eq: gtv}). {In practice, an} accurate estimation of such a covariance can be statistically challenging and often requires substantial side information. Moreover, such joint modeling may overlook dimension-specific characteristics {inherent} in spatiotemporal data, where temporal evolution is smooth and adjacent locations exhibit similar behavior (see Fig. \ref{fig:example1}).

{{Motivated by these considerations, w}e propose a {regularized}
framework that treats time lags and spatial locations separately, encouraging smooth temporal evolution and local spatial similarity. {Specifically,}
we impose a fused penalty along the temporal dimension to promote smoothness across time lags, and a graph-guided regularization across spatial locations to encourage similar behaviors among adjacent regions. Unlike existing GTV-based approaches that rely on estimating a $ts \times ts$ covariance matrix, our method only {uses a prespecified}
$s \times s$ spatial graph,
constructed directly from geographic proximity or 
{simply a}
lattice {structure}
when side information is unavailable.}

\begin{figure}[!t]
  \centering
    \includegraphics[width=0.33\columnwidth]{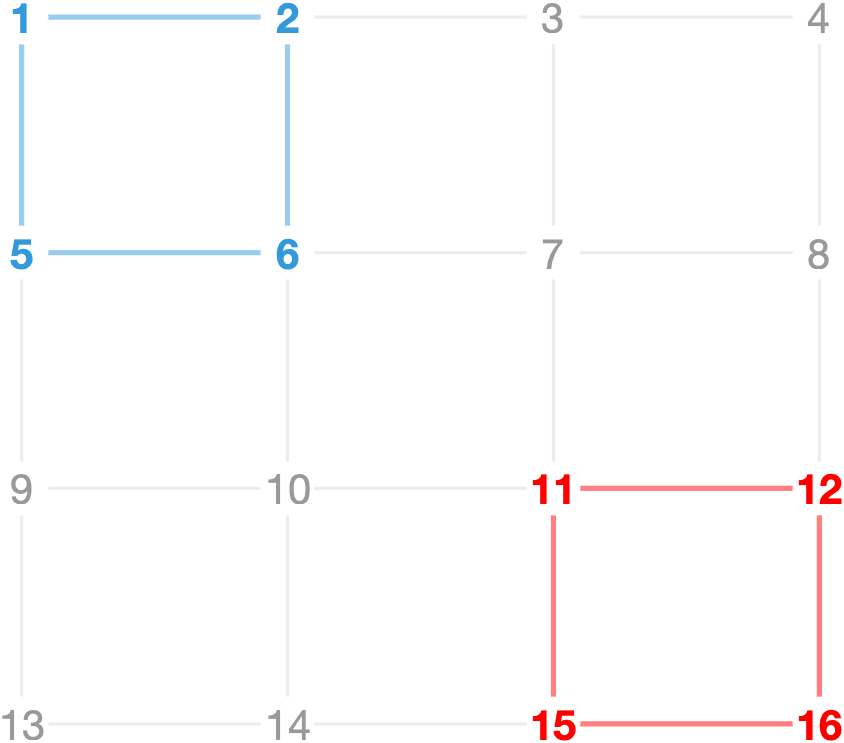}
  \hfil
  \includegraphics[width=0.35\columnwidth]{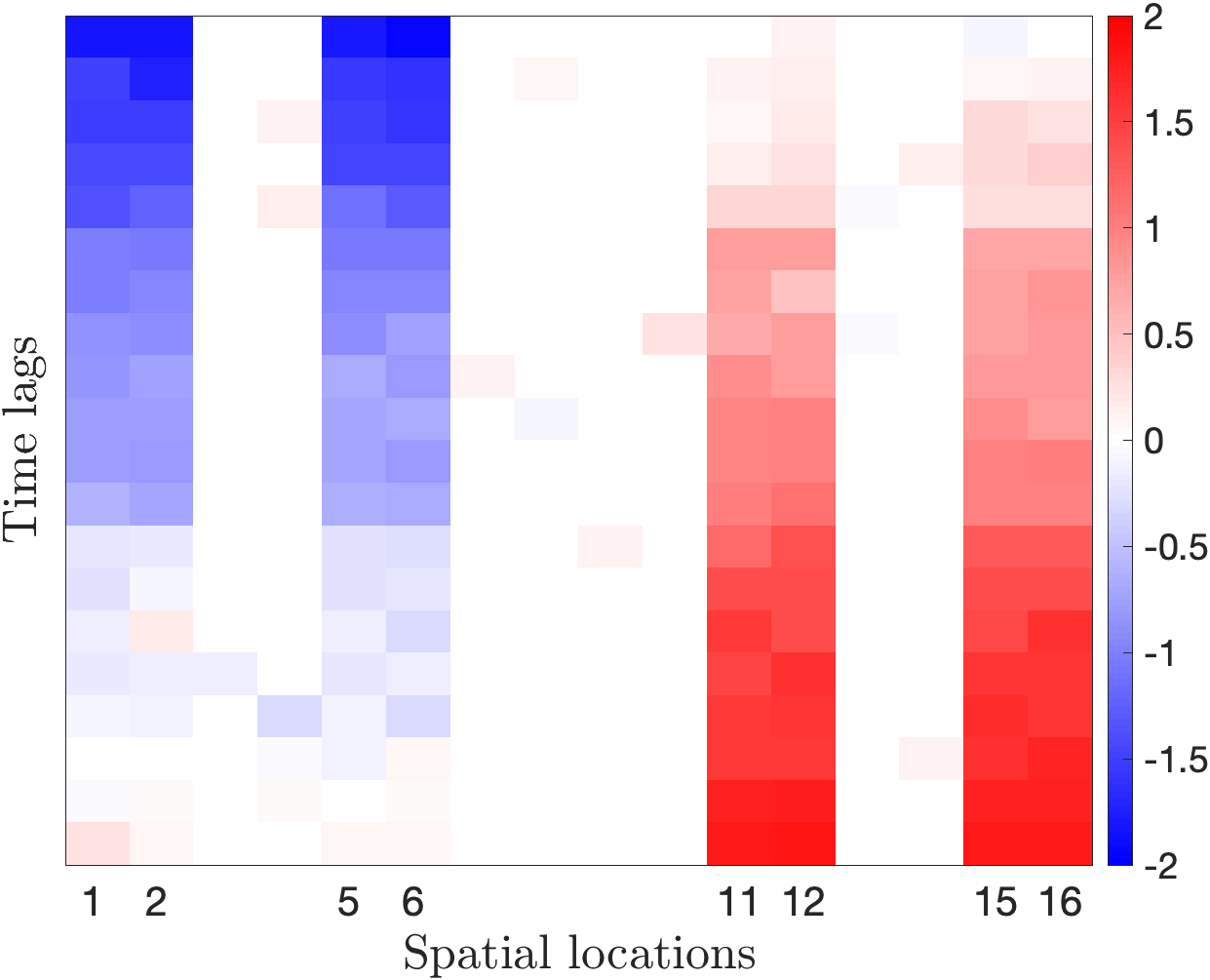}

  \caption{Illustrative spatiotemporal structure in the coefficient matrix. Left: Spatial graph on a $4\times4$ grid,
  where blue and red groups highlight 
  sets of adjacent {locations}.
  Right: Estimated coefficient matrix $\theta$, exhibiting smooth  evolution over time lags, and similarity among adjacent spatial locations.
  }
  \label{fig:example1}
\end{figure}

We further consider a multi-task extension
\begin{equation}\label{eq: model, multi}
y_k^{(r)} = \langle X_k, \theta^{(r)} \rangle + \varepsilon_k^{(r)}, \quad k \in[n],\  r \in [m],
\end{equation}
where each task $r$ has its own regression coefficient $\theta^{(r)} \in \mathbb{R}^{t \times s}$, response $y_k^{(r)}\in \mathbb{R}$ and noise $\varepsilon_k^{(r)}$, while the predictors $X_k \in \mathbb{R}^{t \times s}$ are shared across tasks. To capture relationships among tasks, we adopt a common multi-task learning strategy based on joint $\ell_2$-type regularization, which encourages similar support patterns and information sharing across tasks; see, e.g., \cite[(3)]{stevens2019graph}, \cite[(3)]{chen2010graph}.

{Altogether, we integrate the aforementioned components into}
a unified framework for single- and multi-task {spatiotemporal} regression{, which} 
is formulated as a composite convex optimization problem that combines
a sparsity-inducing $\ell_1$ penalty, a fused temporal penalty,
a graph-guided spatial penalty,
and, in the multi-task setting, an additional $\ell_2$ penalty. 
A standard method for such problems is the alternating direction method of multipliers (ADMM) \cite{gabay1976dual, glowinski1975approximation, eckstein1992douglas}. However, ADMM is known to achieve only non-ergodic rates of {$o(1/\sqrt{k})$} for primal feasibility and objective {gap} \cite{davis2017convergence}, and $\mathcal{O}(1/\sqrt{k})$ for Karush-Kuhn-Tucker (KKT) {residual} \cite{cui2016convergence}, motivating the development of accelerated variants. A recent and effective acceleration strategy incorporates {the} Halpern’s iteration into classical operator splitting schemes. In particular, Zhang et al.~\cite{zhang2022efficient} proposed the Halpern Peaceman–Rachford (HPR) method and established a non-ergodic $\mathcal{O}(1/k)$ convergence rate for both the KKT residual and the primal objective gap. 
Subsequent work showed that HPR is equivalent to a Halpern-accelerated ADMM \cite{chen2024hpr}, and similar acceleration guarantees were obtained for proximal ADMM schemes \cite{sun2025accelerating}.
Beyond  theoretical improvements, HPR has shown strong practical performance in linear programming \cite{chen2024hpr}, convex quadratic programming \cite{chen2025hpr}, and optimal transport \cite{zhang2025hot}. 
These developments motivate the adoption of an HPR-based algorithm for efficiently computing our proposed  estimator.

Our main {contributions are summarized as follows.
\begin{itemize}
\item We propose a unified {regularized} framework for single- and multi-task spatiotemporal matrix regression that separately models temporal smoothness and spatial {similarities},
and we {further} establish the consistency of the proposed estimator.
\item Unlike existing GTV-based approaches \cite{li2018graph,stevens2019graph,li2020graph,stevens2021graph}, the proposed method avoids estimating high-dimensional   $ts\times ts$  covariance matrices and only 
{{uses a prespecified}} 
$ s\times  s$  spatial graph, which can be constructed directly from geographic proximity or a simple lattice structure.
\item We develop an efficient HPR-based algorithm for the resulting composite convex optimization problem with provable non-ergodic $\mathcal{O}(1/k)$ convergence guarantees, and demonstrate its practical advantages through extensive numerical experiments.
\end{itemize}}

The rest of the paper is organized as follows. Section~\ref{sec: model} introduces the proposed single- and multi-task estimators. Section~\ref{sec: consistency} establishes their consistency properties. Section~\ref{sec: alg} develops an efficient HPR-based algorithm for solving the resulting regularized matrix regression problems. Extensive numerical experiments are presented in Section~\ref{sec: experiment}, and we conclude in Section~\ref{sec: conclusion}.

\smallskip
\noindent \textbf{Notations.}
Let $[n]:=\{1,\dots,n\}$ and $I_s$ be the $s\times s$ identity matrix {(or simply $I$ when the dimension is clear from the context)}. 
The symbol $\otimes$ denotes the Kronecker product. 
For a matrix $A$, $\operatorname{sgn}(A)$ is applied componentwise, where {for $t\in\mathbb{R}$,} $\operatorname{sgn}(t)=1$ if $t>0$, $\operatorname{sgn}(t)=0$ if $t=0$, and $\operatorname{sgn}(t)=-1$ if $t<0$. 
For a set $C$, the {characteristic function }$\mathbb{I}_C$ is defined as $\mathbb{I}_C(x)=1$ if $x\in C$ and $\mathbb{I}_C(x)=0$ otherwise.
For a vector $x$, $\|x\|_q$ denotes its $\ell_q$ norm. 
For a matrix $X$, $X_{i\cdot}$ and $X_{\cdot j}$ denote its $i$-th row and $j$-th column, respectively. 
For a collection of matrices $\{A^{(r)}\}_{r=1}^m $, we define its Frobenius norm as $
\|\{A^{(r)}\}_{r=1}^m\|_F
:=  ( \sum_{r=1}^m \|A^{(r)}\|_F^2  )^{1/2}.
$
Let $h:\mathbb{R}^n\to(-\infty,\infty]$ be a closed proper convex function, the proximal mapping of $h$ at $x$ is defined as
$
\operatorname{prox}_h(x) := \arg\min_{y\in\mathbb{R}^n}
\left\{ h(y) + \tfrac{1}{2}\|y-x\|^2 \right\}.
$

\section{Model Formulation}
\label{sec: model}

In this section, we formally present our regression frameworks for both {single-} and multi-task settings. Building upon the motivations in Section \ref{sec: intro}, we design estimators that simultaneously capture sparsity, temporal smoothness, and spatial {similarities.} 
We first formalize the single-task model in Section~\ref{sec:singletask}, then extend it to the multi-task setting in Section~\ref{sec:multitask}, where 
{the} coefficient matrices {of related tasks} are estimated jointly to promote information and sparsity sharing.

\subsection{GGFL: Single-Task  Model}\label{sec:singletask}
Given observations $\{(X_k,y_k) \mid k \in [n]\}$ from model~\eqref{eq: model, single}, we define the linear operator $\mathcal{X} : \mathbb{R}^{t \times s} \to \mathbb{R}^n$ by 
$
\mathcal{X}(\theta) = [\langle X_1, \theta \rangle, \dots, \langle X_n, \theta \rangle ]^{\top}, \ \theta \in \mathbb{R}^{t \times s},
$
and the response vector  $y = [y_1, \dots, y_n]^\top \in \mathbb{R}^n$. 
We propose the \textit{graph-guided (group) fused lasso (GGFL)} model:

\begin{equation}\label{prob-single}
\min_{\theta\in \mathbb{R}^{t\times s}} \left\{
\begin{aligned}
 f(\theta;{\mathcal{X},y}) := \frac{1}{2} \|y-\mathcal{X}(\theta)\|_2^2 + \lambda_1\|\theta\|_1   + \lambda_t \sum_{i=1}^{t-1} \|\Delta_t^{i}\theta\|_p + \lambda_g \sum_{(j,j')\in {\cal E}} w_{jj'} \|\Delta_g^{jj'}\theta\|_q
\end{aligned}
\right\},
\end{equation}
where 
$ \Delta_t^{i}\theta:=\theta_{i\cdot}-\theta_{(i+1)\cdot}\in\mathbb R^s $,
$ \Delta_g^{jj'}\theta:=\theta_{\cdot j}-\theta_{\cdot j'}\in\mathbb R^t$. {Here, $\mathcal{E}$ and $w_{jj'}$ represent the edge set and corresponding weights of a prespecified spatial graph,} 
$\lambda_1,\lambda_t,\lambda_g>0 $ {are regularization parameters,} 
and $p,q\geq1$ specify the norms.

The regularization terms enforce specific structural assumptions.
First, the $\ell_1$-penalty ($\lambda_1$) induces entry-wise sparsity.
Second, the fused temporal penalty ($\lambda_t$) promotes {similarities}
between adjacent time steps, inducing {a smooth evolution over time}.
Third, the graph-guided spatial penalty ($\lambda_g$) encodes structure via the weighted graph $\mathcal{G}=\{[s],\mathcal{E}, (w_{jj'})\}${, where t}he weights $w_{jj'}$ (e.g., inverse distance) quantify pairwise similarity {to enforce} local {consistency}
among geographically adjacent or related locations.

The choice of norms $p$ and $q$ determines the structure of the penalized differences. Setting $p=q=1$  enforces coordinate-wise sparsity, 
while choosing $p,q > 1$  (e.g., $p=q=2$ in our experiments) induces group-level sparsity, {encouraging the entire difference vector}
to be zero \cite{frecon2016onthefly_mtv}.

\begin{remark}[Relation to the graph total variation (GTV) estimator]
The case of $p=q=1$ in \eqref{prob-single} is conceptually related to the GTV estimator \cite[(4)]{stevens2021graph}{, which} 
minimizes
\begin{equation}\label{eq: gtv}
\min_{\theta\in \mathbb{R}^{t\times s}} \left\{\begin{aligned}
&{\frac{1}{n}}\|y-\mathcal{X}(\theta)\|_2^2 + \lambda_1 \|\theta\|_1  + \lambda_{tv} \sum_{(ij),(i'j')} |{\Sigma}_{(ij),(i'j')}|^{\frac{1}{2}} |\theta_{ij} - {s}_{(ij),(i'j')}\theta_{i'j'}|
\end{aligned}\right\},
\end{equation}
where ${\Sigma}\in\mathbb{S}^{ts}_+$ is the covariance matrix of {the vectorized} predictors $[\operatorname{vec}(X_1),\dots,\operatorname{vec}(X_n)]^\top$ estimated {in a prior step, with}
${s} := \operatorname{sgn}({\Sigma})$. The last penalty term in \eqref{eq: gtv} {represents}
the total variation of the signal $\theta$ {over}
a graph 
defined by $\Sigma$.
While the GTV estimator 
promotes similarity {among} 
predictors via {the weighted differences}
$|\theta_{ij} - {s}_{(ij),(i'j')}\theta_{i'j'}|$,
it treats temporal and spatial {dependencies}
uniformly via a {single} large covariance matrix ${\Sigma}$. In contrast,  our model \eqref{prob-single} separates the spatial and temporal structures via distinct fused and graph-guided penalties, thereby avoiding the estimation of a large covariance matrix.

In fact, \eqref{eq: gtv} aligns with our model \eqref{prob-single} under specific structural assumptions.
Suppose the covariance admits a Kronecker sum form $\Sigma = \Sigma_s\oplus\Sigma_t:= \Sigma_s \otimes I_t + I_s \otimes \Sigma_t$, where $\Sigma_t\in \mathbb S^{t}_+$ and $\Sigma_s\in \mathbb S^{s}_+$ represent temporal and spatial covariance matrices, respectively. 
The GTV penalty (last term in \eqref{eq: gtv}) decouples into additive temporal and spatial components:
\begin{align*}
&\lambda_{tv}
 \sum_{i,i'}|(\Sigma_t)_{ii'}|^{\frac{1}{2}}\|\theta_{i\cdot}-\operatorname{sgn}((\Sigma_t)_{ii'})\theta_{i'\cdot}\|_1+ \lambda_{tv}\sum_{j,j'}|(\Sigma_s)_{jj'}|^{\frac{1}{2}}\|\theta_{\cdot j}-\operatorname{sgn}((\Sigma_s)_{jj'})\theta_{\cdot j'}\|_1.
\end{align*}
Specifically, if we assume
\begin{align*}
    &(\Sigma_t)_{ii'} = 
    \begin{cases}
       { (\lambda_t/2)^2,} & \lvert i - i' \rvert = 1, \\
        0, & \lvert i - i' \rvert >1,
    \end{cases}
    \quad\quad(\Sigma_s)_{jj'} = 
    \begin{cases}
        (\lambda_g w_{jj'})^2, & (j,j') \in \mathcal{E}, \\
        0, & (j,j')\notin\mathcal{E}~\text{and}~j'\ne j,
    \end{cases}
\end{align*}
the GTV formulation \eqref{eq: gtv} is equivalent to our model \eqref{prob-single} with $p=q=1$ {by ignoring the constant scaling of the loss}.
\end{remark}

\subsection{MultiGGFL: Multi-Task Model}\label{sec:multitask}
Given observations $\{(X_k,y_k^{(r)}) \mid k \in [n], r \in [m]\}$ from model~\eqref{eq: model, multi}, we define the response vector of task~$r$ by $y^{(r)} = [y_1^{(r)}, \dots, y_n^{(r)}]^\top \in \mathbb{R}^n$, and collect the regression coefficient $\theta^{(r)}$ into a third-order tensor {$\Theta\in \mathbb{R}^{t\times s\times m}$}
with the $r$-th slice $\Theta^{(r)} = \theta^{(r)}$. Let $\Theta^{(r)}_{ij}$ denote the $(i,j)$-th entry of $\Theta^{(r)}$, and define 
$
\Theta_{[ij]} := [\Theta^{(1)}_{ij}, \Theta^{(2)}_{ij}, \dots, \Theta^{(m)}_{ij}]^\top \in\mathbb{R}^m.
$
See Fig.~\ref{fig:multitask example} for an illustration of these notations. Based on this setup, we propose the \textit{multi-task graph-guided (group) fused lasso (MultiGGFL)} model:
\begin{align}\label{prob-multi}
\min_{\Theta \in \mathbb{R}^{t \times s \times m}} \ \sum_{r=1}^m f(\Theta^{(r)};{\mathcal{X},y^{(r)}}) + \lambda_2 \sum_{i=1}^t\sum_{j=1}^s \|\Theta_{[ij]}\|_2.
\end{align}
Here $f(\Theta^{(r)};{\mathcal{X},y^{(r)}}) $ {denotes}
the single-task GGFL objective \eqref{prob-single} for task $r$, and the second term with parameter $\lambda_2\geq 0$ {promotes}
shared sparsity patterns across tasks by encouraging the grouped coefficients $\Theta_{[ij]}$ to be  zero vectors. This group-wise penalty term is widely used in the learning of multiple tasks, see, e.g., \cite[(3)]{stevens2019graph}, \cite[(3)]{chen2010graph}. When $\lambda_2=0$, this model {decouples into} 
$m$ independent single-task GGFL problems{; furthermore, if}
$m=1$, it reduces to the single-task GGFL~\eqref{prob-single}.

\begin{figure}[htbp]
\centering
\includegraphics[width=0.95\linewidth]{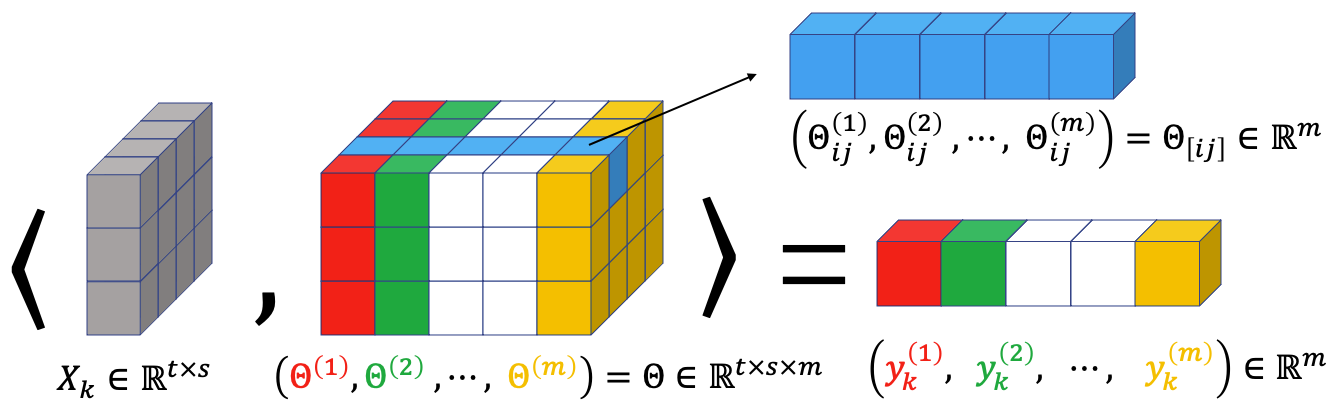}
\caption{Illustration of multi-task model~\eqref{eq: model, multi} and its notations.
Each cube represents a coefficient. 
The highlighted blue cuboid corresponds to the grouped coefficient vector $\Theta_{[ij]}$  across tasks. 
}
\label{fig:multitask example}
\end{figure}

\section{Consistency Analysis}
\label{sec: consistency}

In this section, we establish consistency {analysis}
for the 
multi-task estimator~\eqref{prob-multi}, from which the single-task case follows as a special case.

To this end, we begin by stating a set of regularity assumptions that are standard in the consistency analysis of statistical estimators \cite{bassett1978asymptotic,pollard1991asymptotics,knight2000asymptotics}. Let $\varepsilon_k = [\varepsilon_k^{(1)}, \varepsilon_k^{(2)}, \dots, \varepsilon_k^{(m)}]^\top \in \mathbb{R}^m$ denote the vector of noise terms for the $k$-th observation, and let $\varepsilon^{(r)} = [\varepsilon_1^{(r)}, \varepsilon_2^{(r)}, \dots, \varepsilon_n^{(r)}]^\top \in \mathbb{R}^n$ denote the noise vector for task $r$.

\begin{assumption}\label{assu: iid}
The sequence $\{\varepsilon_k \mid k\in[n]\}$ consists of independent and identically distributed random vectors satisfying $\mathbb{E}[\varepsilon_k] = 0$ and $\mathbb{E}[\varepsilon_k \varepsilon_k^\top] = \Sigma_\varepsilon$.
\end{assumption}

\begin{assumption}\label{assu: regularity}
The empirical covariance operator of the predictors converges to a positive definite limit, i.e.,
\[
\mathcal{M}_n := \frac{1}{n}\mathcal{X}^*\mathcal{X} \;\longrightarrow\; \mathcal{M},
\]
for some self-adjoint positive definite operator $\mathcal{M}: \mathbb{R}^{t\times s} \to \mathbb{R}^{t\times s}$.
\end{assumption}

Under Assumptions~\ref{assu: iid} and~\ref{assu: regularity}, there exists a tensor $\Xi \in \mathbb{R}^{t\times s\times m}$ whose  slices $\Xi^{(r)}$'s are mean-zero Gaussian matrices in $\mathbb{R}^{t\times s}$ satisfying
\begin{equation}\label{eq: cov}
\mathrm{Cov}\big((\Xi^{(r)})_{ij}, (\Xi^{(r')})_{i'j'}\big)
= (\Sigma_\varepsilon)_{rr'}\,
\langle E_{ij}, \mathcal{M}(E_{i'j'}) \rangle,
\end{equation}
for all $ r,r'\in[m],\ i,i'\in[t],\ j,j'\in[s],$ such that, for each $r\in[m]$,
$$
\Xi_n^{(r)}=\frac{1}{\sqrt{n}} \mathcal{X}^* \varepsilon^{(r)} \rightarrow  \Xi^{(r)}
$$ 
in distribution as $n\rightarrow \infty$. Here $E_{ij}\in\mathbb{R}^{t\times s}$ denote the matrix with a one in position $(i,j)$ and zeros elsewhere. 

Moreover, Assumption~\ref{assu: regularity} ensures that, for sufficiently large $n$, the operator $\mathcal{X}^*\mathcal{X}$ is nonsingular, which guarantees the existence of a unique minimizer of the empirical objective in \eqref{prob-multi}. 
{We therefore focus on the following equivalent formulation of the MultiGGFL model~\eqref{prob-multi}:}
\begin{align*}
&\widehat{\Theta}_n = \underset{\Upsilon\in\mathbb R^{t\times s\times m}}{\arg\min}
\ \sum_{r=1}^m \ \frac{1}{2}\|y^{(r)}-\mathcal X(\Upsilon^{(r)})\|_2^2 \\&+\lambda_{t,n}\sum_{r=1}^m\sum_{i=1}^{t-1}\|\Delta_t^{i}\Upsilon^{(r)}\|_{p} +\lambda_{g,n}\sum_{r=1}^m\sum_{(j,j')\in\mathcal E}w_{jj'}\|\Delta_g^{jj'}\Upsilon^{(r)}\|_{q}  \\
&+\lambda_{1,n}\sum_{i=1}^t\sum_{j=1}^s\|\Upsilon_{[ij]}\|_1+\lambda_{2,n}\sum_{i=1}^t\sum_{j=1}^s\|\Upsilon_{[ij]}\|_{2} .
\end{align*}

Our goal is to establish the consistency result of the estimator $\widehat{\Theta}_n$ under the asymptotic regime $n\to\infty$. The following theorem shows that $\widehat{\Theta}_n$ achieves $\sqrt{n}$-consistency.
In particular, when the penalty parameters satisfy $\lambda_{i,n} = o(\sqrt{n})$ for $i \in {1,2,t,g}$, the estimator $\widehat{\Theta}_n$ converges in distribution to the true coefficient tensor ${\Theta}$ in model \eqref{eq: model, multi} as $n \to \infty$.

\begin{theorem}
    Suppose Assumptions \ref{assu: iid} and \ref{assu: regularity} hold. If $\lambda_{i,n}/\sqrt{n}\rightarrow\lambda_{i,0}\geq 0 $ for $i=1,2,t,g.$ Then 
    $$
    \sqrt{n}(\widehat{\Theta}_n- \Theta)\rightarrow \underset{\zeta \in \mathbb{R}^{t\times s\times m}}{\arg\min} F_\infty({\zeta})
    $$ in distribution as $n\rightarrow \infty$, where $\Theta$ is the true coefficient tensor from model \eqref{eq: model, multi}, and
\[
\begin{aligned}
F_\infty(\zeta)
&:= \sum_{r=1}^m \frac{1}{2}\Big( \langle \zeta^{(r)}, \mathcal M(\zeta^{(r)})\rangle - 2\langle \zeta^{(r)},  \Xi^{(r)}\rangle \Big) \\
&\quad + \lambda_{t,0}\sum_{r=1}^m\sum_{i=1}^{t-1} \phi_{p,\Delta_t^{i}{ \Theta}^{(r)}} \big( \Delta_t^{i}\zeta^{(r)}\big)+ \lambda_{g,0}\sum_{r=1}^m\sum_{(j,j')\in\mathcal E} w_{jj'}\,\phi_{q,\Delta_g^{jj'}{ \Theta}^{(r)}} \big( \Delta_g^{jj'}\zeta^{(r)}\big) \\
&\quad + \lambda_{1,0}\,\sum_{i=1}^t\sum_{j=1}^s\phi_{1,{ \Theta}_{[ij]}} \big(\zeta_{[ij]}\big) + \lambda_{2,0}\sum_{i=1}^t\sum_{j=1}^s \phi_{2,{ \Theta}_{[ij]}} \big( \zeta_{[ij]}\big). 
\end{aligned}
\] 
with
\[
    \phi_{\alpha,v}(h) =
    \begin{cases}
    \sum_{k:v_k\neq 0} \operatorname{sgn}(v_k)\,h_k\,
      + \sum_{k:v_k= 0} |h_k|\, & \text{if } \alpha = 1, \\
      \mathbb{I}_{\{v\neq 0\}}v^\top h/\|v\|_2
+ \mathbb{I}_{\{v=0\}}\|h\|_2  & \text{if } \alpha=2.
    \end{cases}
    \] 
    for any vectors $v$ and $h$ of the same length. 
    
    In particular, if $\lambda_{i,n}=o(\sqrt{n})$ for $i=1,2,t,g$, then we have 
    $$
    \sqrt{n}(\widehat{\Theta}_n- \Theta)\rightarrow \operatorname*{argmin}_{\zeta \in \mathbb R^{t\times s \times m}} F_{\infty}(\zeta)=\overline \zeta
    $$ 
    with {$ \bar{\zeta}^{(r) }=\mathcal{M}^{-1} \Xi^{(r)}$} 
    for $r\in [m]$, where $\Xi^{(r)}$'s are mean-zero Gaussian matrices satisfying \eqref{eq: cov}.
\end{theorem}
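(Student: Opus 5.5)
We follow the Knight--Fu epi-convergence argument for lasso-type estimators \cite{knight2000asymptotics}. Reparametrize by $\zeta=\sqrt{n}(\Upsilon-\Theta)$, so that $\Upsilon=\Theta+\zeta/\sqrt{n}$. Define the centered objective $V_n(\zeta)$ as the MultiGGFL objective evaluated at $\Theta+\zeta/\sqrt n$ minus its value at $\Theta$. Then $\sqrt n(\widehat\Theta_n-\Theta)=\arg\min_\zeta V_n(\zeta)$. Substituting $y^{(r)}=\mathcal X(\Theta^{(r)})+\varepsilon^{(r)}$, the quadratic part becomes
\[
\sum_{r=1}^m\Big(\tfrac12\langle\zeta^{(r)},\mathcal M_n(\zeta^{(r)})\rangle-\langle \zeta^{(r)},\Xi_n^{(r)}\rangle\Big).
\]
By Assumption~\ref{assu: regularity} and the stated central limit result for $\Xi_n^{(r)}$, this converges in distribution, for each fixed $\zeta$, to the quadratic part of $F_\infty$. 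The convergence holds jointly across $r$ because the vector $\operatorname{vec}(\Xi_n^{(1)},\dots,\Xi_n^{(m)})$ converges jointly to the Gaussian tensor $\Xi$ with covariance \eqref{eq: cov}, by the multivariate Lindeberg CLT under Assumption~\ref{assu: iid}.

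Each penalty has the form $\lambda_n\big(\|v+h/\sqrt n\|_\alpha-\|v\|_\alpha\big)=\frac{\lambda_n}{\sqrt n}\cdot\sqrt n\big(\|v+h/\sqrt n\|_\alpha-\|v\|_\alpha\big)$. Here $v$ is a linear image of $\Theta$ (namely $\Delta_t^i\Theta^{(r)}$, $\Delta_g^{jj'}\Theta^{(r)}$ or $\Theta_{[ij]}$) and $h$ is the same linear image of $\zeta$. Since $\lambda_n/\sqrt n\to\lambda_0$, it suffices to compute the directional derivative $\lim_{u\downarrow0}(\|v+uh\|_\alpha-\|v\|_\alpha)/u$. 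For $\alpha=1$ and $\alpha=2$ this is exactly $\phi_{\alpha,v}(h)$, computed coordinatewise for $\ell_1$ and from differentiability away from $0$ plus positive homogeneity at $0$ for $\ell_2$. For general $p,q$ (with the theorem read as $p,q\in\{1,2\}$, or $\phi_{p,v}$ understood as the directional derivative of $\|\cdot\|_p$), the same argument applies. This gives $V_n(\zeta)\to F_\infty(\zeta)$ in the sense of finite-dimensional distributions.

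Next we upgrade to convergence of argmins. Each $V_n$ is convex in $\zeta$, so pointwise (fdd) convergence to the convex limit $F_\infty$ implies uniform convergence on compacts in distribution. This follows from the convexity lemma of \cite{pollard1991asymptotics}, or from epi-convergence as in \cite{knight2000asymptotics}. $F_\infty$ is strictly convex with a unique minimizer, since $\mathcal M$ is positive definite and the penalty terms are convex. It is also coercive, because the quadratic dominates the Lipschitz penalties. Moreover $\arg\min V_n=O_p(1)$: $V_n$ has a quadratic lower bound $\tfrac12 c\|\zeta\|_F^2-\|\Xi_n\|\|\zeta\|-C\|\zeta\|$ for $n$ large, where $c>0$ is eventually a lower bound on the eigenvalues of $\mathcal M_n$, the penalty difference is bounded by $(\lambda_n/\sqrt n)\,C\|\zeta\|$ via the triangle inequality, and $\Xi_n=O_p(1)$. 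Applying the argmin continuous mapping theorem (Kim--Pollard or Geyer) then yields $\arg\min V_n\to\arg\min F_\infty$ in distribution.

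In the case $\lambda_{i,n}=o(\sqrt n)$, all $\lambda_{i,0}=0$. Then $F_\infty$ decouples across $r$ into $\tfrac12\langle\zeta^{(r)},\mathcal M\zeta^{(r)}\rangle-\langle\zeta^{(r)},\Xi^{(r)}\rangle$, and the first-order condition gives $\bar\zeta^{(r)}=\mathcal M^{-1}\Xi^{(r)}$. The main obstacle is the rigorous passage from fdd convergence of random convex functions to convergence of their minimizers. This rests on the convexity lemma, which supplies the needed uniform convergence on compacts, together with the tightness bound above; the remaining steps are the routine directional-derivative computations and the CLT.
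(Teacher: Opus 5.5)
Your proposal is correct and follows essentially the same route as the paper: you center the objective in $\zeta=\sqrt n(\Upsilon-\Theta)$, obtain pointwise convergence of the quadratic part from the CLT for $\Xi_n^{(r)}$ and of each penalty from directional derivatives, and then pass to the argmin via the convexity lemma (the paper cites Geyer's version) together with uniqueness of the minimizer of the strictly convex $F_\infty$. Your extra details fill in points the paper leaves implicit: the joint convergence of $(\Xi_n^{(1)},\dots,\Xi_n^{(m)})$ across tasks, the explicit $O_p(1)$ bound on the minimizers, and reading $\phi_{p,v}$ and $\phi_{q,v}$ as directional derivatives of $\|\cdot\|_p$ and $\|\cdot\|_q$ for general $p,q$.
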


\begin{proof}
    Denote  $\widehat{\zeta}_n:=\sqrt n\big(\widehat{\Theta}_n-\Theta\big)$ and define a function $F_n:\mathbb{R}^{t\times s\times m}\rightarrow \mathbb{R}$ as
\[
\begin{aligned}
\quad F_n(\zeta)\!=&\! \sum_{r=1}^m\frac{1}{2}\Big( \|y^{(r)}-\mathcal X({ \Theta}^{(r)}+\zeta^{(r)}/\sqrt n)\|_2^2 - \|y^{(r)}-\mathcal X({ \Theta}^{(r)})\|_2^2 \Big) \\
&+ \lambda_{t,n} \sum_{r=1}^m \sum_{i=1}^{t-1} \Big(\|\Delta_t^{i}{ \Theta}^{(r)}+\Delta_t^{i}\zeta^{(r)}/\sqrt n\|_p-\|\Delta_t^{i}{ \Theta}^{(r)}\|_p\Big) \\
&+ \lambda_{g,n} \!\sum_{r=1}^m \!\sum_{(j,j')\in\mathcal E} \!\!\!\!\! w_{jj'}\!\Big(\|\Delta_g^{jj'} \! { \Theta}^{(r)} \!+\!\Delta_g^{jj'}\!\zeta^{(r)}\!/\!\sqrt n\|_q \! -\|\Delta_g^{jj'} \! { \Theta}^{(r)}\|_q\!\Big) \\
&+ \lambda_{1,n} \sum_{i=1}^t\sum_{j=1}^s \Big(\|{ \Theta}_{[ij]}+\zeta_{[ij]}/\sqrt n\|_1-\|{ \Theta}_{[ij]}\|_1\Big) \\
&+ \lambda_{2,n}\sum_{i=1}^t\sum_{j=1}^s \Big(\|{ \Theta}_{[ij]}+\zeta_{[ij]}/\sqrt n\|_{2}-\|{ \Theta}_{[ij]}\|_{2}\Big).
\end{aligned}
\]
By construction, we have 
$$
\underset{\zeta\in \mathbb{R}^{t\times s \times m}}{\arg\min} F_n(\zeta)=\widehat{\zeta}_n=\sqrt n(\widehat\Theta_n-\Theta).
$$

First, by Taylor expansion, the smooth data-fitting term satisfies:
\begin{align*}
&\quad \sum_{r=1}^m\frac{1}{2}\Big( \|y^{(r)}-\mathcal X({ \Theta}^{(r)}+\zeta^{(r)}/\sqrt n)\|_2^2 - \|y^{(r)}-\mathcal X({ \Theta}^{(r)})\|_2^2 \Big)\\
&= \sum_{r=1}^m\frac{1}{2} \Big(-2\langle \zeta^{(r)}, \Xi_n^{(r)}\rangle + \langle \zeta^{(r)}, \mathcal M_n(\zeta^{(r)})\rangle\Big)
\\
&\rightarrow \sum_{r=1}^m\frac{1}{2} \Big( -2\langle \zeta^{(r)}, \Xi^{(r)}\rangle + \langle \zeta^{(r)}, \mathcal M(\zeta^{(r)})\rangle\Big)
\end{align*}
in distribution as $n \to \infty$, using Assumptions \ref{assu: iid} and \ref{assu: regularity}.

Next, for vectors $v$ and $h$ of the same length, we have
\begin{align*}
   \|v+h/\sqrt n\|_1-\|v\|_1=\frac{1}{\sqrt{n}}\sum_{k:v_k\neq 0} \operatorname{sgn}(v_k)\,h_k\,
      + \frac{1}{\sqrt{n}} \sum_{k:v_k= 0} |h_k| =\frac{1}{\sqrt{n}}\,\phi_{1,v}(h), 
\end{align*}
and also
\begin{align*}
\|v+h/\sqrt n\|_2-\|v\|_2&=\frac{1}{\sqrt{n}} \mathbb{I}_{\{v\neq 0\}}v^\top h/\|v\|_2
+ \frac{1}{\sqrt{n}} \mathbb{I}_{\{v=0\}}\|h\|_2 +o\left(\frac{1}{\sqrt{n}}\right) \\
&=\frac{1}{\sqrt{n}}\,\phi_{2,v}(h)+o\left(\frac{1}{\sqrt{n}}\right),
\end{align*}
by  Taylor expansion.
Given the condition that  $\lambda_{i,n}/\sqrt n\to\lambda_{i,0}$ for $i= 1,2,t,g$, we obtain the following component-wise convergence. For  $i\in [t]$, $j\in[s]$,
\[
\begin{aligned}
&\lambda_{1,n}\big(\|{ \Theta}_{[ij]}+\zeta_{[ij]}/\sqrt n\|_1-\|{ \Theta}_{[ij]}\|_1\big)
\ \to\ \lambda_{1,0}\,\phi_{1,{ \Theta}_{[ij]}}(\zeta_{[ij]}),\\
&\lambda_{2,n}\big(\|{ \Theta}_{[ij]}+\zeta_{[ij]}/\sqrt n\|_{2}-\|{ \Theta}_{[ij]}\|_{2}\big)
\ \to\ \lambda_{2,0}\, \phi_{2,{ \Theta}_{[ij]}}(\zeta_{[ij]}),
\end{aligned}
\]
and for   $r\in[m]$, $i\in [t-1]$, $(j,j')\in \mathcal{E}$,
\[
\begin{aligned}\lambda_{t,n}\big(\|\Delta_t^{i}{ \Theta}^{(r)}+\Delta_t^{i}\zeta^{(r)}/\sqrt n\|_p-\|\Delta_t^{i}{ \Theta}^{(r)}\|_p\big)
&\to\ \lambda_{t,0} \, \phi_{p,\Delta_t^{i}{ \Theta}^{(r)}}(\Delta_t^{i}\zeta^{(r)}),\\
\lambda_{g,n} \big(\|\Delta_g^{jj'}{ \Theta}^{(r)}+\Delta_g^{jj'}\zeta^{(r)}/\sqrt n\|_q-\|\Delta_g^{jj'}{ \Theta}^{(r)}\|_q\big)
&\to\ \lambda_{g,0}\, \phi_{q,\Delta_g^{jj'}{ \Theta}^{(r)}}(\Delta_g^{jj'}\zeta^{(r)}),
\end{aligned}
\]
as $n\to \infty $. Therefore, $F_n(\zeta)\rightarrow F_\infty(\zeta)$ in distribution for every fixed $\zeta\in \mathbb{R}^{t\times s\times m}$.

Since $F_\infty$ is strictly convex with a unique minimizer and $F_n$ is convex, the convexity lemma \cite{geyer1996asymptotics} implies
$$
\sqrt n(\widehat\Theta_n-\Theta)=\underset{\zeta\in \mathbb{R}^{t\times s \times m}}{\arg\min} F_n(\zeta) \rightarrow \underset{\zeta\in \mathbb{R}^{t\times s \times m}}{\arg\min} F_\infty(\zeta)
$$ 
in distribution as $n\rightarrow \infty$. 
In particular, when  $\lambda_{i,0}=0$ for $i=1,2,t,g$, the unique minimizer of $F_\infty$ takes the explicit form:
$$
\overline \zeta:=\operatorname*{argmin}_{\zeta \in \mathbb R^{t\times s \times m}} F_{\infty}(\zeta) $$
with $ \overline \zeta^{(r) }=\mathcal{M}^{-1} \Xi^{(r)}$ for $ r\in [m].$ This completes the proof.
\end{proof}

\section{A Halpern Peaceman-Rachford Method}
\label{sec: alg}
In this section, we employ the Halpern Peaceman–Rachford (HPR) method to solve the single-task learning problem \eqref{prob-single} and its multi-task counterpart \eqref{prob-multi}. Since 
the single-task model 
\eqref{prob-single} arises as the special case of the multi-task model~\eqref{prob-multi} with $m=1$ and $\lambda_{2}=0$, we focus on the latter problem. 

For  
subsequent algorithmic development, it is convenient to rewrite \eqref{prob-multi} into the following compact form:
\begin{equation}\label{eq: prob-multi-refor-uncons}
\min 
\ \ell(\Theta)
+ \Omega(\mathcal P\Theta)
+ \Phi(\mathcal Q\Theta)
+ \Psi(\Theta),
\end{equation}
where the operators $\mathcal P$, $\mathcal Q$, and the functions $\ell,\Omega,\Phi,\Psi$ are introduced below.
First, the squared loss function is given by
\[
\ell(\Theta)
=\frac{1}{2}\sum_{r=1}^{m}\|y^{(r)}-\mathcal{X}(\Theta^{(r)})\|_{2}^{2},
\]
for $\Theta = [\Theta^{(1)},\dots,\Theta^{(m)}]\in \mathbb{R}^{t\times s\times m}$.
Second, {temporal smoothness is imposed by the $(t-1)\times t$  difference matrix $P\in\mathbb{R}^{(t-1)\times t}$, based on which  we  define the linear operator $\mathcal P:\mathbb R^{t\times s\times m}\to\mathbb R^{(t-1)\times s\times m}$:
\[
P=\begin{bmatrix}
1 & -1 &  &  & \\
 & 1 & -1 &  & \\
 & & \ddots & \ddots & \\
 &  &  & 1 & -1
\end{bmatrix}, \; \mathcal P\Theta= [P\Theta^{(1)},\dots, P\Theta^{(m)}].
\]
The temporal regularizer  $\Omega:\mathbb{R}^{(t-1)\times s\times m} \to \mathbb{R}$ then takes the form
\begin{align*}
\Omega(W) = \lambda_{t} \sum_{r=1}^m\sum_{i=1}^{t-1}\| W^{(r)}_{i\cdot} \|_{p}, 
\quad   W = [W^{(1)},\dots,W^{(m)}].
\end{align*}
Third, spatial smoothness is imposed by the node-arc incidence matrix of the spatial graph 
$\mathcal G=([s],\mathcal E,(w_{jj'}))$.
Let $|\mathcal E|$ be the number of edges, indexed lexicographically via $\iota:\mathcal E\to[|\mathcal E|]$.
The (unweighted) incidence matrix $B\in\mathbb{R}^{s\times|\mathcal E|}$ and the linear operator $\mathcal Q:\mathbb R^{t\times s\times m}\to\mathbb R^{t\times |\mathcal E|\times m}$ are defined by
\[
B_{k,\,\iota(j,j')}=
\begin{cases}
1, & k=j,\\
-1, & k=j',\\
0, & \text{otherwise},
\end{cases}\;\;
\mathcal Q\Theta=[\Theta^{(1)} B,\dots,\Theta^{(m)} B].
\]
The spatial regularizer $\Phi:\mathbb{R}^{t\times |\mathcal E|\times m} \to \mathbb{R}$ is then
\begin{align*}
\Phi(Z)=\lambda_{g} \sum_{r=1}^m \sum_{j,j'} w_{jj'} \|Z^{(r)}_{\cdot\, \iota(j,j')}\|_q,  
\quad  Z = [Z^{(1)},\dots,Z^{(m)}].
\end{align*}
Lastly, the cross-task sparse group Lasso regularizer is given by
\[
\Psi(\Theta)
=\sum_{i=1}^{t}\sum_{j=1}^{s}
\Bigl(\lambda_{1}\,\|\Theta_{[ij]}\|_{1}
+\lambda_{2}\,\|\Theta_{[ij]}\|_{2}\Bigr),
\]
where the $\ell_1$ term  promotes sparsity at the element-wise level, {and}  
the $\ell_2$ term 
{encourages} a shared sparsity support across tasks.
}

\subsection{An HPR Method for Solving MultiGGFL}
To apply the HPR method, we rewrite problem~\eqref{eq: prob-multi-refor-uncons} in an equivalent constrained form by introducing three sets of slack variables that separate the loss, temporal regularizer (slack variable $W\in\mathbb R^{(t-1)\times s\times m}$), spatial regularizer (slack variable $Z\in\mathbb R^{t\times|\mathcal E|\times m}$), and cross-task sparse group regularizer (slack variable $U\in\mathbb R^{t\times s\times m}$). Specifically, we consider
\begin{align}
\min
&\quad  \ell(\Theta)
+ \Omega(W)
+ \Phi(Z)
+ \Psi(U)
\label{prob-multi-refor} \\[1mm]
\text{s.t.}
&\quad \mathcal P\Theta - W = 0, \
  \mathcal Q\Theta - Z = 0, \
  \Theta - U = 0. \notag
\end{align}
Let  
\(S\in\mathbb R^{(t-1)\times s\times m}\),  
\(T\in\mathbb R^{t\times|\mathcal E|\times m}\),  
and \(R\in\mathbb R^{t\times s\times m}\)  
be the Lagrange multipliers associated with the three linear constraints.  
The KKT system associated with \eqref{prob-multi-refor} for a primal-dual pair 
\( H:=(\Theta, W, Z, U, S, T, R) \)  is
\begin{equation}\label{eq:KKT}
\begin{aligned}
&0=\nabla\ell(\Theta)+\mathcal P^{*} S+\mathcal Q^{*} T + R,\\[1mm]
& 0\in \partial\Omega(W)-S,
\quad  0\in \partial\Phi(Z)-T,
\quad 0\in \partial\Psi(U)-R,\\[1mm]
&\mathcal P\Theta - W = 0,
\quad \mathcal Q\Theta - Z = 0,
\quad \Theta - U = 0.
\end{aligned}
\end{equation}
Since the primal problem \eqref{prob-multi-refor} has a solution, it follows from \cite[Corollary 28.3.1]{rockafellar1997convex} that the KKT system admits a solution. 
For $\sigma>0$, the augmented Lagrangian function associated with~\eqref{prob-multi-refor} is
\begin{align*}
& \mathcal L_{\sigma}(\Theta,W,Z,U;S,T,R)
=
   \ell(\Theta)
 + \Omega(W)
 + \Phi(Z)
 + \Psi(U)   \\[1mm]
& + \langle S, \mathcal P\Theta - W\rangle
 + \langle T, \mathcal Q\Theta - Z\rangle
 + \langle R, \Theta - U\rangle  \\[1mm]
 & + \frac{\sigma}{2}\Bigl(
     \|\mathcal P\Theta - W\|_F^{2}
   + \|\mathcal Q\Theta - Z\|_F^{2}
   + \|\Theta - U\|_F^{2}
   \Bigr).
\end{align*}

Based on this setup, we now describe the HPR method for solving~\eqref{prob-multi-refor}, summarized in Algorithm~\ref{alg:hpr}. Its global convergence follows from \cite[Corollary 3.5]{sun2025accelerating}. For completeness, we state it in Theorem~\ref{thm:thm-convergence-1}.
\begin{algorithm}[htbp]
  \caption{An HPR method for solving~\eqref{prob-multi-refor}: {$(\overline{H}_{k+1},k+1) = \textbf{HPR}(H_0, \sigma)$.} 
  }
  \label{alg:hpr}
  \begin{algorithmic}
    \STATE \textbf{Input:} 
    \(
      H_0=(\Theta_0,W_0,Z_0,U_0,S_0,T_0,R_0)\),
    \(  \sigma>0.
    \)

    \FOR{$k=0,1,\ldots$}

    \STATE \textbf{Step 1.}  
      $\displaystyle  
      \overline\Theta_{k+1}
      =
      \underset{\Theta}{\arg\min}\,
      \mathcal L_{\sigma}(
     \Theta,W_k,Z_k,U_k,S_k,T_k,R_k).
      $
 \vspace{0.5mm}
 
    \STATE \textbf{Step 2.}  Update multipliers: 
    \begin{align*}
    &\overline S_{k+1} = S_k + \sigma (\mathcal P\overline\Theta_{k+1}-W_k),\\
     & \overline T_{k+1} = T_k + \sigma (\mathcal Q\overline\Theta_{k+1}-Z_k),\\
     & \overline R_{k+1} = R_k + \sigma (\overline\Theta_{k+1}-U_k).
    \end{align*}

    \STATE \textbf{Step 3.} Update slack variables:
    \begin{align*}
      &\quad (\overline W_{k+1},\overline Z_{k+1},\overline U_{k+1})
      =
      \underset{W,Z,U}{\arg\min}\
      \mathcal L_{\sigma}\bigl(
        \overline\Theta_{k+1},W,Z,U;
        \overline S_{k+1},\overline T_{k+1},\overline R_{k+1}
      \bigr).
   \end{align*}

    \STATE \textbf{Step 4.} {Extrapolation:}
      $\displaystyle
      \widehat H_{k+1} = 2\,\overline H_{k+1} - H_k.
      $
  \vspace{2mm}
    \STATE \textbf{Step 5.} {Halpern iteration:}
      $$\displaystyle
      H_{k+1} = \frac{1}{k+2} H_0 + \frac{k+1}{k+2} \widehat H_{k+1}.
      $$

    \ENDFOR

    \STATE \textbf{Output:} 
    \(
    {(\overline H_{k+1}, k+1).}
    \)

  \end{algorithmic}
\end{algorithm}

\begin{theorem} \label{thm:thm-convergence-1}
The sequence 
$
\{\overline{H}_k\} =\{
(\overline{\Theta}_k, \overline{W}_k, \overline{Z}_k, \overline{U}_k, \overline{S}_k, \overline{T}_k, \overline{R}_k)\}
$
generated by Algorithm~\ref{alg:hpr} converges to 
$
H^* = (\Theta^*, W^*, Z^*, U^*, S^*, T^*, R^*)
$,
{which satisfies the KKT system \eqref{eq:KKT}.}
Namely, $(\Theta^*, W^*, Z^*, U^*)$ is an optimal solution to problem~\eqref{prob-multi-refor}, and $(S^*, T^*, R^*)$ is an optimal solution to its dual problem.
\end{theorem}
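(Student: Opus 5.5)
The plan is to recognize Algorithm~\ref{alg:hpr} as an instance of the Halpern-accelerated ADMM studied in \cite{sun2025accelerating}, and then invoke \cite[Corollary 3.5]{sun2025accelerating}. To do so, I will write \eqref{prob-multi-refor} as a two-block problem
\[
\min\; f_1(\Theta) + f_2(V) \quad \text{s.t.}\quad \mathcal A\Theta - V = 0.
\]
Here $V:=(W,Z,U)$, $f_1:=\ell$, $f_2(V):=\Omega(W)+\Phi(Z)+\Psi(U)$, and $\mathcal A\Theta:=(\mathcal P\Theta,\mathcal Q\Theta,\Theta)$. The multiplier is $Y:=(S,T,R)$. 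Under this identification, the augmented Lagrangian $\mathcal L_\sigma$ is exactly the two-block augmented Lagrangian. Steps~1--3 are one ADMM/Peaceman--Rachford sweep: a $\Theta$-update, then a dual update, then a $V$-update. Steps~4--5 apply the Halpern anchoring $H_{k+1}=\frac{1}{k+2}H_0+\frac{k+1}{k+2}(2\overline H_{k+1}-H_k)$ to the reflected operator.

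\textbf{Verifying the hypotheses.} Next I will check the standing assumptions of the cited result.
\begin{itemize}
\item $f_1,f_2$ are closed proper convex. This holds because $\ell$ is a convex quadratic and $\Omega,\Phi,\Psi$ are sums of norms.
\item Each subproblem is well posed with a unique solution. For the $\Theta$-step, $\mathcal A^*\mathcal A=\mathcal P^*\mathcal P+\mathcal Q^*\mathcal Q+\mathcal I\succeq \mathcal I$, so the subproblem is strongly convex. The $V$-step is separable into three proximal maps of norms, each single-valued.
\item The KKT system \eqref{eq:KKT} has a solution. This was already noted in the text from \cite[Corollary 28.3.1]{rockafellar1997convex}, since \eqref{prob-multi-refor} has an optimal solution: the objective is coercive because of $\Psi$ with $\lambda_1>0$.
\end{itemize}
Because $\mathcal A$ has full column rank and no proximal terms are needed, we are in the plain (semi-proximal with zero proximal term) case of \cite{sun2025accelerating}. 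The underlying operator is then the Peaceman--Rachford reflection of a maximal monotone inclusion, which is nonexpansive in the appropriate metric.

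\textbf{Convergence argument.} The cited corollary gives convergence of $\{H_k\}$ to a fixed point of the (nonexpansive) PR operator $T$. This is Halpern's theorem, which holds since the fixed point set is nonempty by the KKT existence. The corollary also gives $\|H_k-\widehat H_{k+1}\|\to 0$, so $\overline H_{k+1}=\frac12(H_k+\widehat H_{k+1})$ converges to the same limit $H^*$. Finally, I will show that $H^*$ satisfies \eqref{eq:KKT} by passing to the limit in the optimality conditions of Steps~1 and~3:
\begin{itemize}
\item Step~1 gives $\nabla\ell(\overline\Theta_{k+1})+\mathcal A^*\overline Y_{k+1}=0$.
\item Step~3 gives $\overline Y_{k+1}+\sigma(\mathcal A\overline\Theta_{k+1}-\overline V_{k+1})\in\partial f_2(\overline V_{k+1})$.
\end{itemize}
The residual $\mathcal A\overline\Theta_{k+1}-\overline V_{k+1}\to 0$ follows from the vanishing fixed-point residual. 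Outer semicontinuity (closed graph) of $\partial f_2$ then yields the KKT conditions at $H^*$. Optimality of the primal and dual parts then follows from convexity and standard Lagrangian duality.

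The main obstacle is the bookkeeping of the variable ordering. In \cite{sun2025accelerating} the Halpern step is applied to the $(V,Y)$ pair, with $\Theta$ as an auxiliary block. Here Algorithm~\ref{alg:hpr} anchors the full $H$, including $\Theta$. I would first check that $\Theta_k$ does not enter Steps~1--3, since those steps use only $W_k,Z_k,U_k,S_k,T_k,R_k$. If so, anchoring $\Theta$ is harmless. Convergence of $\overline\Theta_{k+1}$ then follows from that of the other blocks, because $\overline\Theta_{k+1}$ is a continuous (affine) function of $(V_k,Y_k)$ via the strongly convex Step~1.
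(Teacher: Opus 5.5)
Your overall route is the paper's. You recast \eqref{prob-multi-refor} in the two-block form of \cite{sun2025accelerating}, with the slack block $(W,Z,U)$ carrying $B_1=-I$ and $\Theta$ carrying the stacked operator $B_2=(\mathcal P,\mathcal Q,I)$. You then check that $B_1^*B_1$ and $B_2^*B_2$ are positive definite, and invoke \cite[Corollary 3.5]{sun2025accelerating}. Two small remarks:
\begin{itemize}
\item Your labels ($f_1=\ell$, $f_2=\Omega+\Phi+\Psi$) are the reverse of the paper's ($z=\Theta$, $f_2=\ell$). The paper's identification is the one matching the update order of the cited scheme.
\item The bookkeeping obstacle you anticipate does not arise. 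By the remark after the theorem, Algorithm~\ref{alg:hpr} follows \cite[Algorithm 3.1]{sun2025accelerating} with $\alpha=\rho=2$ and $\mathcal T_1=\mathcal T_2=0$, anchoring the full iterate.
\end{itemize}

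The real problem is how you extract convergence of $\overline H_k$. With $\mathcal T_1=\mathcal T_2=0$ the underlying metric is only positive semidefinite. In your notation $V=(W,Z,U)$, $Y=(S,T,R)$, Steps~1--3 depend on $H_k$ only through $u_k:=\sigma V_k-Y_k$. You can see this in $b_k^{(r)}$ and in $\overline S_{k+1}=\sigma\mathcal P\overline\Theta_{k+1}-(\sigma W_k-S_k)$, and similarly for the other blocks. The Halpern/nonexpansiveness theory controls $H_k$ only through this quantity. It gives neither $H_k\to H^*$, nor $\|H_k-\widehat H_{k+1}\|\to 0$, nor convergence of $V_k$ and $Y_k$ separately, which your last paragraph also uses. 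In the uncontrolled directions, e.g.\ the $\Theta$-block, the error $e_k:=\Theta_k-\Theta^*$ obeys $e_{k+1}=\frac{1}{k+2}e_0+\frac{k+1}{k+2}\bigl(2(\overline\Theta_{k+1}-\Theta^*)-e_k\bigr)$. This is an anchored reflection whose convergence does not follow from $\overline\Theta_{k+1}\to\Theta^*$; without the anchor it is the familiar Peaceman--Rachford oscillation. So the step $\overline H_{k+1}=\frac12(H_k+\widehat H_{k+1})\to H^*$ rests on an unproven premise.

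There are two ways to repair it. The first is to take the conclusion of \cite[Corollary 3.5]{sun2025accelerating} directly for $\{\overline H_k\}$, as the paper does. The second is to work with $u_k$ itself:
\begin{itemize}
\item Steps~4--5 give $u_{k+1}=\frac{1}{k+2}u_0+\frac{k+1}{k+2}\bigl(2(\sigma\overline V_{k+1}-\overline Y_{k+1})-u_k\bigr)$.
\item One checks that $-u_k$ is exactly Halpern's iteration for the Peaceman--Rachford operator of the dual inclusion. That operator is nonexpansive in the Euclidean norm and has a fixed point because \eqref{eq:KKT} is solvable.
\item Wittmann's theorem then yields $u_k\to u^*$.
\item $\overline H_{k+1}$ is a Lipschitz function of $u_k$ (an invertible affine solve followed by proximal maps), so it converges.
\item Your closed-graph argument identifies the limit as a KKT point, with primal feasibility coming from $u^*$ being a fixed point.
\end{itemize}
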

\begin{proof}
We first rewrite problem~\eqref{prob-multi-refor} in a standard two-block form \cite[(1.1)]{sun2025accelerating}:
\[
\min_{y,z} \quad f_1(y) + f_2(z)
\quad \text{s.t.} \quad
B_1 y + B_2 z = 0,
\]
where 
$
f_1(y) := \Omega( W) + \Phi( Z) + \Psi( U)$, $
f_2(z) := \ell(\Theta)
$, with
\[
y:=\begin{bmatrix}
     W\\  Z\\  U
\end{bmatrix},\quad
z := \Theta,\quad
B_1 = -I,
\quad
B_2 =
\begin{bmatrix}
\mathcal{P}\\
\mathcal{Q}\\
{I}
\end{bmatrix}.
\]
Since both $B_1^* B_1$ and $B_2^* B_2$ are positive definite, Assumptions~1 and~2 in \cite{sun2025accelerating} hold.  The global convergence then follows  directly from ~\cite[Corollary~3.5]{sun2025accelerating}.
\end{proof}

\begin{remark}
Although Algorithm~\ref{alg:hpr} follows the update steps of the accelerated preconditioned ADMM (pADMM) framework~\cite[Algorithm 3.1]{sun2025accelerating} with parameters $\alpha=\rho=2$ and $\mathcal{T}_1=\mathcal{T}_2=0$,
it has been shown in~\cite[Proposition 4]{chen2024hpr} that pADMM with this parameter setting is mathematically equivalent to the HPR method in~\cite{zhang2022efficient}.
For this reason, we refer to Algorithm~\ref{alg:hpr} as the HPR method throughout this paper.
\end{remark}

We measure the convergence of Algorithm~\ref{alg:hpr} via the KKT residual and the objective value gap.
The residual mapping $\mathcal{R}$ associated with the KKT system~\eqref{eq:KKT} is defined as:
\begin{equation*}
\mathcal R(H)
:=
\left(
\begin{array}{c}
\Theta - \operatorname{prox}_{\ell} (\Theta - \mathcal P^* S - \mathcal Q^* T -R)\\[2pt]
W - \operatorname{prox}_{\Omega}(W + S)\\[2pt]
Z - \operatorname{prox}_{\Phi}(Z + T)\\[2pt]
U - \operatorname{prox}_{\Psi}(U + R)\\[2pt]
\mathcal P\Theta - W\\[2pt]
\mathcal Q\Theta - Z\\[2pt]
\Theta - U
\end{array} \right),
\end{equation*}
for $H=(\Theta, W, Z, U, S, T, R)$.
A point \(H^*\) satisfies \eqref{eq:KKT} if and only if \(\mathcal R(H^*) = 0\).
We also measure the optimality gap for the objective function $F(\Theta,W,Z,U)
:= \ell(\Theta)
+ \Omega(W)
+ \Phi(Z)
+ \Psi(U)$ in \eqref{prob-multi-refor}:
\begin{align*}
&\quad \Delta F(\overline\Theta_k,\overline W_k,\overline Z_k,\overline U_k)
:= F(\overline\Theta_k,\overline W_k,\overline Z_k,\overline U_k)
   - F(\Theta^*, W^*, Z^*, U^*),
\end{align*}
where \((\Theta^*,W^*,Z^*,U^*)\) denotes the limit point of the sequence 
\(\{(\overline\Theta_k,\overline W_k,\overline Z_k,\overline U_k)\}\) established in Theorem~\ref{thm:thm-convergence-1}, and thus is an optimal solution of~\eqref{prob-multi-refor}.

Based on \cite[Theorem~3.7]{sun2025accelerating}, Algorithm~\ref{alg:hpr} achieves a non-ergodic $\mathcal O(1/k)$ convergence rate, as stated below.

\begin{theorem}
Let \(\{\overline{ H}_k\}=
\bigl\{(\overline{\Theta}_k,\overline{ W}_k,\overline{ Z}_k,\overline{ U}_k,\overline{ S}_k,\overline{ T}_k,\overline{ R}_k)\bigr\}
\) be the sequence generated by Algorithm~\ref{alg:hpr}, and let
\(
{ H}^*=(\Theta^*, W^*, Z^*, U^*, S^*, T^*, R^*)
\)
be its limit point. Then for all $k\ge 0$, the following bounds hold:
$$
 \bigl\|\mathcal{R}(\overline{{ H}}_k)\bigr\|_F
\leq { c \left(1+\frac{1}{\sigma}\right)\frac{1}{k+1},}
$$
and
$$ -\frac{c d}{\sigma} \frac{1}{k+1} \leq \Delta F (\overline{\Theta}_k,\overline{ W}_k,\overline{ Z}_k,\overline{U}_k) \leq
\frac{(3c + d) c}{\sigma} \frac{1}{k+1}, 
$$
where 
$$
{c:=}\left\|
\sigma
\begin{pmatrix}
    W_0 - W^*\\Z_0-Z^*\\U_0-U^*
\end{pmatrix}  - 
\begin{pmatrix}
    S_0-S^*\\T_0-T^*\\R_0-R^*
\end{pmatrix} 
\right\|_{F}
$$
measures the distance {from} the initial point to the limit point, and  $d = \|(S^*,T^*,R^*)\|_F$ is the norm of the optimal multipliers.
\end{theorem}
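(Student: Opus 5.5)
This rate should follow by casting Algorithm~\ref{alg:hpr} as an instance of the accelerated pADMM of \cite[Algorithm 3.1]{sun2025accelerating} and invoking \cite[Theorem~3.7]{sun2025accelerating}. We reuse the two-block reformulation from the proof of Theorem~\ref{thm:thm-convergence-1}. There $y=(W,Z,U)$, $z=\Theta$, $B_1=-I$, $B_2=[\mathcal P;\mathcal Q;I]$, $f_1(y)=\Omega(W)+\Phi(Z)+\Psi(U)$ and $f_2(z)=\ell(\Theta)$. We take parameters $\alpha=\rho=2$ and $\mathcal T_1=\mathcal T_2=0$, which are admissible by the preceding remark. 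Assumptions~1--2 of \cite{sun2025accelerating} were already verified in Theorem~\ref{thm:thm-convergence-1}, since $B_1^*B_1$ and $B_2^*B_2\succeq I$ are positive definite. Hence the generated sequence converges to a KKT point $H^*$, and Theorem~3.7 of \cite{sun2025accelerating} applies with that $H^*$ as the reference solution.

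The work is in translating the constants. In \cite{sun2025accelerating} the rate is stated in terms of $\|w_0-w^*\|_{\mathcal M}$, where $w=(y,z,x)$ and $\mathcal M$ is the degenerate pADMM metric. When $\mathcal T_2=0$, the $z$-block of $\mathcal M$ vanishes, and what remains couples $y$ and the multiplier $x=(S,T,R)$. We will compute this seminorm explicitly with $B_1=-I$. Up to the scaling convention of \cite{sun2025accelerating}, it should equal $\frac{1}{\sigma}\|\sigma B_1(y_0-y^*)+(x_0-x^*)\|$. Expanding $B_1=-I$ gives, up to an overall sign, exactly $\frac{1}{\sigma}c$ with $c$ as in the statement. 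Substituting this into their KKT-residual bound, which has the form $(\text{const}+\sigma^{-1}\cdot\text{const})\|w_0-w^*\|_{\mathcal M}/(k+1)$ for residuals defined through proximal maps with unit step, yields $c(1+1/\sigma)/(k+1)$ for $\|\mathcal R(\overline H_k)\|_F$.

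For the objective gap we use the standard Lagrangian sandwich. The lower bound is $\Delta F\ge -\langle x^*,\text{primal residual}\rangle\ge -d\,\|\text{feasibility residual}\|$. The upper bound comes from their inequality combining $\|\mathcal M\|$-distances and feasibility, which gives $\Delta F \le (3c+d)c/(\sigma(k+1))$. Here the feasibility residual of $\overline H_k$ is controlled by $c/(\sigma(k+1))$, because $\overline S_{k+1}-S_k=\sigma(\mathcal P\overline\Theta_{k+1}-W_k)$ and similarly for $T$ and $R$.

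The main obstacle is the bookkeeping. We need to identify the residual mapping $\mathcal R$ and the metric in \cite{sun2025accelerating} with the ones defined here, including the proximal step sizes, the signs, and whether $\overline H_k$ or $H_k$ is measured, so that the stated constants come out exactly. I would first write out $\mathcal M$ for $\mathcal T_1=\mathcal T_2=0$, $\sigma$, $B_1=-I$, and check it against $c$. If the constants do not match, the looser form with a generic multiple of $c$ still gives the $\mathcal O(1/k)$ non-ergodic rate.
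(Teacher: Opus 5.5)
Your route is the paper's route. The paper gives no separate argument: it invokes \cite[Theorem~3.7]{sun2025accelerating} on the two-block reformulation from the proof of Theorem~\ref{thm:thm-convergence-1}. So the whole content is the translation of constants, and the one translation you commit to is wrong. With $\mathcal T_1=\mathcal T_2=0$ and $w=(y,z,x)$, the metric that the pADMM steps actually induce is
\[
\mathcal M=\begin{bmatrix}\sigma B_1^*B_1 & 0 & B_1^*\\ 0 & 0 & 0\\ B_1 & 0 & \sigma^{-1}I\end{bmatrix},
\qquad
\|w\|_{\mathcal M}^2=\sigma^{-1}\|\sigma B_1y+x\|^2 .
\]
Hence $R_0:=\|w_0-w^*\|_{\mathcal M}=c/\sqrt{\sigma}$, not $c/\sigma$: you carried the factor from $\|\cdot\|_{\mathcal M}^2$ over to $\|\cdot\|_{\mathcal M}$. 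The matching residual prefactor is $\sqrt\sigma+1/\sqrt\sigma$, not something of the form $\mathrm{const}+\sigma^{-1}\mathrm{const}$, and $(\sqrt\sigma+1/\sqrt\sigma)\,c/\sqrt\sigma=c(1+1/\sigma)$. Written in terms of $R_0$, the stated gap bounds are $-dR_0/(\sqrt\sigma(k+1))$ and $(3R_0+d/\sqrt\sigma)R_0/(k+1)$. Rescaling $\mathcal M$ changes $R_0$ and the prefactor inversely, so this is not a matter of convention: your pairing gives the wrong powers of $\sigma$. Your fallback to ``a generic multiple of $c$'' does not prove the statement, whose content is exactly these constants.

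You can check this directly, which also repairs your feasibility step.
\begin{itemize}
\item \textbf{Feasibility.} Step~2 gives $B_1\bar y^{k+1}+B_2\bar z^{k+1}=\sigma^{-1}\bigl[\sigma B_1(\bar y^{k+1}-y^k)+(\bar x^{k+1}-x^k)\bigr]$, whose norm is $\sigma^{-1/2}\|\bar w^{k+1}-w^k\|_{\mathcal M}$. The multiplier identity you cite only controls $\mathcal P\overline\Theta_{k+1}-W_k$; you need the slack increment as well. The Halpern estimate $\|w^k-\bar w^{k+1}\|_{\mathcal M}\le R_0/(k+1)$ then bounds the feasibility residual by $c/(\sigma(k+1))$.
\item \textbf{$\Theta$-block.} This block of $\mathcal R(\overline H_{k+1})$ vanishes identically, since Steps~1--2 give $\nabla\ell(\overline\Theta_{k+1})+\mathcal P^*\overline S_{k+1}+\mathcal Q^*\overline T_{k+1}+\overline R_{k+1}=0$.
\item \textbf{$(W,Z,U)$-block.} Step~3 gives $\bar x^{k+1}+\sigma(B_2\bar z^{k+1}-\bar y^{k+1})\in\partial f_1(\bar y^{k+1})$. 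By nonexpansiveness of $\operatorname{prox}_{f_1}$, this block is at most $\sigma$ times the feasibility residual.
\end{itemize}
Together these give $\|\mathcal R(\overline H_{k+1})\|_F\le(1+\sigma)c/(\sigma(k+1))$.

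For the gap, the saddle-point inequality gives $\Delta F\ge -d\,c/(\sigma(k+1))$. Adding the two subgradient inequalities gives $\Delta F\le-\langle B_1\bar y^{k+1}+B_2\bar z^{k+1},\,x^*-(\bar u-u^*)\rangle$ with $u:=\sigma y-x$. Here $\|\bar u-u^*\|=\sqrt\sigma\|\bar w^{k+1}-w^*\|_{\mathcal M}\le\sqrt\sigma R_0=c$, so $\Delta F\le(c+d)c/(\sigma(k+1))\le(3c+d)c/(\sigma(k+1))$.

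Note that all of this bounds $\overline H_{k+1}$ with denominator $k+1$. Reconciling that with the statement's $\overline H_k$ over $k+1$ is exactly the index bookkeeping you were right to worry about.
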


\subsection{Per-iteration Updates and Costs}
\label{sec: updata_complexity}

This subsection details the explicit update formulas and computational costs for each step of Algorithm~\ref{alg:hpr}. 

Step 1 of Algorithm \ref{alg:hpr} can be solved by setting the gradient of $ \mathcal L_\sigma$ with respect to each $\Theta^{(r)}$ to be zero. A direct calculation shows that each ${\overline\Theta}_{k+1}^{(r)}$ can be computed by solving the linear system
\begin{align}
\mathcal{X}^{*} \mathcal{X}({\overline\Theta_{k+1}^{(r)}}) 
&+ \sigma (I_{t} + P^{\top}P){\overline\Theta_{k+1}^{(r)}}
+ \sigma {\overline\Theta_{k+1}^{(r)}}(Q Q^{\top})
= b_k^{(r)}, \label{eq: linear system}
\end{align}
where 
\begin{align*}
b_k^{(r)} &:=
\mathcal{X}^{*} y^{(r)}
+ P^{\top}\big(\sigma  W_k^{(r)} -   S_k^{(r)}\big) + \big(\sigma  Z_k^{(r)} -   T_k^{(r)}\big)Q^{\top}
+ \sigma   U_k^{(r)} -   R_k^{(r)},
\end{align*}
and $\mathcal{X}^{*}$ denotes the adjoint of $\mathcal{X}$. {We denote the temporal and spatial Laplacians as $L_P = P^\top P$ and $L_Q = Q Q^\top$, respectively, with their entries defined as:}
\begin{align*}
&(L_P)_{ii'} =
\begin{cases}
2, & 2\le i=i'\le t-1,\\
1, & i=i'=1~\text{or}~i=i'=t,\\
-1, & |i-i'|=1,\\
0, & \text{otherwise},
\end{cases} \qquad(L_Q)_{jj'} =
\begin{cases}
\deg(v_j), & j'=j,\\
-1, & j'\ne j~\text{and}~(j,j')\in\mathcal{E},\\
0, & \text{otherwise},
\end{cases}
\end{align*}
{where $\deg(v_j)$ denotes the degree of vertex $v_j$ in the spatial graph $\mathcal{G}$.}
Vectorizing \eqref{eq: linear system} yields the linear system
\begin{align}
&\big(X^\top X + \sigma I_s \otimes (I_t + L_P) 
      + \sigma L_Q \otimes I_t\big)\,\mathrm{vec}({\overline\Theta_{k+1}^{(r)}}) = \mathrm{vec}(b_k^{(r)}),\label{eq: vec linear system}
\end{align}
where 
$X=[\mathrm{vec}(X_1),\mathrm{vec}(X_2),\ldots,\mathrm{vec}(X_n)]^\top
   \in \mathbb R^{n\times ts}$.
It can be solved efficiently by direct solvers 
(e.g., Cholesky factorization for small or medium scales) or  iterative solvers such as the conjugate gradient method.

In {Step~3}, the variables $\mathcal W$, $\mathcal Z$, and $\mathcal U$ are updated via the following proximal mappings:
\[
\begin{aligned}
\overline W_{k+1}
&= \operatorname{prox}_{\Omega/\sigma } \left(\mathcal P\,\overline\Theta_{k+1}
      + \sigma^{-1}\,\overline S_{k+1}\right),\\[1mm]
\overline Z_{k+1}
&= \operatorname{prox}_{\Phi/\sigma } \left(\mathcal Q\,\overline\Theta_{k+1}
      + \sigma^{-1}\,\overline T_{k+1}\right),\\[1mm]
\overline U_{k+1}
&= \operatorname{prox}_{\Psi/\sigma } \left(\overline\Theta_{k+1}
      + \sigma^{-1}\,\overline R_{k+1}\right),
\end{aligned}
\]
which admit closed-form solutions.  
For $\nu>0$, $U\in\mathbb R^{t\times s\times m}$, $W\in\mathbb R^{(t-1)\times s\times m}$, and 
$Z\in\mathbb R^{t\times|\mathcal E|\times m}$, they are given  by
\[
\begin{aligned}
& \bigl[\operatorname{prox}_{\Omega/\sigma}(W)\bigr]^{(r)}_{i\cdot}
   = \operatorname{prox}_{\lambda_t/\sigma\|\cdot\|_p}\Bigl(W^{(r)}_{i\cdot}\Bigr),\\[3pt]
& \bigl[\operatorname{prox}_{\Phi/\sigma}(Z)\bigr]^{(r)}_{\cdot\,\iota(j,j')}
   = \operatorname{prox}_{\lambda_g w_{jj'}/\sigma\|\cdot\|_q}\Bigl(Z^{(r)}_{\cdot\,\iota(j,j')}\Bigr), \\[3pt]
&   \bigl[\operatorname{prox}_{\Psi/\sigma}(U)\bigr]_{[ij]} 
   = \operatorname{prox}_{\lambda_2/\sigma\|\cdot\|_2}
       \Bigl(\operatorname{prox}_{\sigma^{-1}\lambda_1\|\cdot\|_1}(U_{[ij]})\Bigr).
\end{aligned}
\]

With these update rules, we are ready {to} quantify the computational cost of one loop of Algorithm~\ref{alg:hpr}. 
First, a one-time preprocessing step is required to form the coefficient matrix $X^\top X + \sigma I_s\otimes(I_t + L_P ) + \sigma L_Q\otimes I_t \in \mathbb{S}^{ts}$ and the vectors $\{ \mathcal{X}^* y^{(r)} \mid r\in[m]\}$ appearing on the right-hand side of the linear system~\eqref{eq: vec linear system}. These computations require $\mathcal{O}(t^2s^2n)$ and $tsmn$ operations, respectively. Subsequently, factorizing the resulting $ts \times ts$ matrix using the Cholesky decomposition incurs additional $\mathcal{O}(t^3s^3)$ operations. 
Next, within each iteration of Algorithm~\ref{alg:hpr},  the right-hand side $b_k^{(r)}$ of the linear system~\eqref{eq: vec linear system}  can be formed in $\mathcal{O}(ts + t|\mathcal E|)$ operations for each $r\in [m]$, and solving the system using the precomputed Cholesky factors then costs $\mathcal{O}(t^2s^2)$ operations per right-hand side. Thus, Step~1 of Algorithm~\ref{alg:hpr} requires $\mathcal{O}(tsm + t|\mathcal E|m+t^2s^2m)$ operations per loop. The remaining proximal updates for $W$, $Z$, and $U$ are lightweight, each costing at most $\mathcal{O}(tsm + t|\mathcal E|m)$, which is dominated by the linear-system solve. In summary, the dominant computational cost in each loop of Algorithm~\ref{alg:hpr} comes from solving the linear system in Step~1, which requires $\mathcal{O}(tsm + t|\mathcal E|m+t^2s^2m)$ operations, while all remaining updates contribute only lower-order costs.

\subsection{Practical Acceleration}
\label{sec: acceleration}
To enhance practical efficiency, we incorporate two strategies  from \cite{chen2024hpr,chen2025hpr}:
a restart mechanism that adaptively reinitializes the sequence to accelerate convergence, and an adaptive $\sigma$ update that dynamically balances the primal and dual residuals.

First, we monitor the progress of the $(k+1)$-th iteration via the residual quantity:
\[
c_{k}
:=
\left\|\sigma\begin{pmatrix}
\widehat{{W}}_{k+1}-{W}_{k}\\
\widehat{{Z}}_{k+1}-{Z}_{k}\\
\widehat{{U}}_{k+1}-{U}_{k}
\end{pmatrix}-\begin{pmatrix}
\widehat{{S}}_{k+1}-{S}_{k}\\
\widehat{{T}}_{k+1}-{T}_{k}\\
\widehat{{R}}_{k+1}-{R}_{k}
\end{pmatrix} \right\|_{F}.
\]
We check $c_k$ every 50 iterations, and the HPR loop terminates (restarts) if any of the following hold:
\begin{equation}\label{eq:restart}
\begin{aligned}
&c_{k-1}<c_{k}\le \alpha_{1}c_{0}, && \alpha_{1}\in(0,1),\\
&c_{k}\le \alpha_{2}c_{0}, && \alpha_{2}\in(0,\alpha_{1}),\\
&k\ge \alpha_{3}\,t, && \alpha_{3}>0.
\end{aligned}
\end{equation}
In our experiments, we set $\alpha_1 = 0.6$, $\alpha_2 = 0.2$, and $\alpha_3 = 0.25$.
The three conditions trigger a restart when: $c_k$ stops decaying (local stagnation);  $c_k$ drops sufficiently (successful decay); or the inner iteration  count $k$ becomes excessive relative to the total history $t$.

Upon restart, we update $\sigma$ to balance primal and dual progress. We measure the relative motion of the primal and dual variables by
\begin{align}
\Delta_{p}:=\left\|
\begin{pmatrix}
\overline{ W}_{\kappa+1}-\overline{ W}_{\kappa}\\
\overline{ Z}_{\kappa+1}-\overline{ Z}_{\kappa}\\
\overline{ U}_{\kappa+1}-\overline{ U}_{\kappa}
\end{pmatrix}\right\|_{F},\quad
\Delta_{d}:=\left\|
\begin{pmatrix}
\overline{ S}_{\kappa+1}-\overline{ S}_{\kappa}\\
\overline{ T}_{\kappa+1}-\overline{ T}_{\kappa}\\
\overline{ R}_{\kappa+1}-\overline{ R}_{\kappa}
\end{pmatrix}\right\|_{F},
\label{eq:delta-def}
\end{align}
and set $\sigma_{\kappa +1}= {\Delta_{d}}/{\Delta_{p}}$.
This practical implementation is summarized in Algorithm \ref{alg: hpr-restart}.

\begin{algorithm}[htbp]
  \caption{Practical HPR for solving~\eqref{prob-multi-refor}}
  \label{alg: hpr-restart}
  \begin{algorithmic}[1]
    \STATE \textbf{Input:}  $\overline{H}_{0}$, $\sigma_{0}=1$. Set total iteration count {$t\gets 0$}.
    \FOR{outer iteration $\kappa=0,1,\dots$}
      \STATE \textbf{Step 1.} Run $(\overline{H}_{\kappa+1}, {\Delta t}) \leftarrow \mathbf{HPR}(\overline{H}_{\kappa}, \sigma_{\kappa})$, checking conditions {in} \eqref{eq:restart} every 50 iterations. Terminate when met.
      \STATE \textbf{Step 2.} 
      Set $\sigma_{\kappa+1} \leftarrow {\Delta_{d}}/{\Delta_{p}}$, and {$t\gets t+ \Delta t$}{, where $\Delta_{d}, \Delta_{p}$ are computed via \eqref{eq:delta-def}.}
    \ENDFOR
    \STATE \textbf{Output:} {$\overline{H}_{\kappa+1}$.}
  \end{algorithmic}
\end{algorithm}

\section{Numerical Experiments}
\label{sec: experiment}
In this section, we conduct extensive numerical experiments designed to examine both the statistical and computational aspects of our proposed estimators, compared with the state-of-the-art GTV and its variant {\cite{stevens2019graph,li2020graph,stevens2021graph}.} 
We first evaluate their statistical behavior through simulation studies. 
Next, we investigate the computational efficiency and scalability of the proposed HPR algorithm in both single- and multi-task settings. Finally, we demonstrate the practical utility of our estimators in a real application on winter precipitation forecasting in the Southwestern United States (SWUS).

Experiments were conducted in MATLAB R2024a on an Intel i7-860 (2.80 GHz) CPU with 16 GB RAM. Unless otherwise noted, Algorithm~\ref{alg: hpr-restart} terminates if the total iteration count reaches $2000$ or the normalized KKT residual satisfies:
\begin{equation*}
\eta_{\mathrm{KKT}}:=\max\{R_d,R_p\}\le 10^{-4},
\end{equation*}
where the primal ($R_p$) and dual ($R_d$)  residuals are defined as:
\begin{align*}
R_{p} = \max\Bigg\{&
\frac{\|\mathcal P\overline{\Theta}_{\kappa}-\overline{W}_{\kappa}\|_{F}}{1+\|\overline{W}_{\kappa}\|_{F}},
\frac{\|\mathcal Q\overline{\Theta}_{\kappa}-\overline{Z}_{\kappa}\|_{F}}{1+\|\overline{Z}_{\kappa}\|_{F}},
\frac{\|\overline{\Theta}_{\kappa} - \overline{U}_{\kappa}\|_{F}}{1+\|\overline{U}_{\kappa}\|_{F}}
\Bigg\}, \\
R_{d} = \max\Bigg\{
&\frac{\bigl\|\nabla \ell(\overline{\Theta}_{\kappa}) +  \mathcal P^{*}\overline{S}_{\kappa}  + \mathcal Q^{*}\overline{T}_{\kappa}+ \overline{R}_{\kappa} \bigr\|_{F}}{1+\|\overline{R}_{\kappa}\|_{F}},\frac{\bigl\|\overline{W}_{\kappa} - \operatorname{prox}_{\Omega}(\overline{W}_{\kappa}  + \overline{S}_{\kappa})\bigr\|_{F}}{1+\|\overline{W}_{\kappa}\|_{F}},\\[3pt]
&\frac{\bigl\|\overline{Z}_{\kappa} - \operatorname{prox}_{\Phi}(\overline{Z}_{\kappa}  + \overline{T}_{\kappa})\bigr\|_{F}}{1+\|\overline{Z}_{\kappa}\|_{F}},\frac{\bigl\|\overline{U}_{\kappa} - \operatorname{prox}_{\Psi}(\overline{U}_{\kappa} + \overline{R}_{\kappa})\bigr\|_{F}}{1+\|\overline{U}_{\kappa}\|_{F}}\Bigg\}.
\end{align*}

In our experiments, we consider a setting where the $s$ spatial locations lie on an $s_1\times s_2$ grid ($s=s_1s_2$). For simplicity, we set the weights in our models \eqref{prob-single} and \eqref{prob-multi} to reflect 4-neighbor connectivity: 
\begin{equation} \label{def:w4}
w_{jj'} = \begin{cases} 1, & \text{if } j \text{ and } j' \text{ are adjacent,} \\ 0, & \text{otherwise,} 
\end{cases} 
\end{equation}
{which connect}
immediate horizontal and vertical neighbors in the spatial space.

\subsection{Synthetic Data}
We first compare the performance of the GGFL estimator with that of GTV \cite{stevens2021graph}, and then assess the computational scalability of the HPR algorithm {(Algorithm~\ref{alg: hpr-restart})} as the dimensions of the problem increase.

\begin{figure*}[htbp]
\centering
\includegraphics[width=0.28\linewidth]{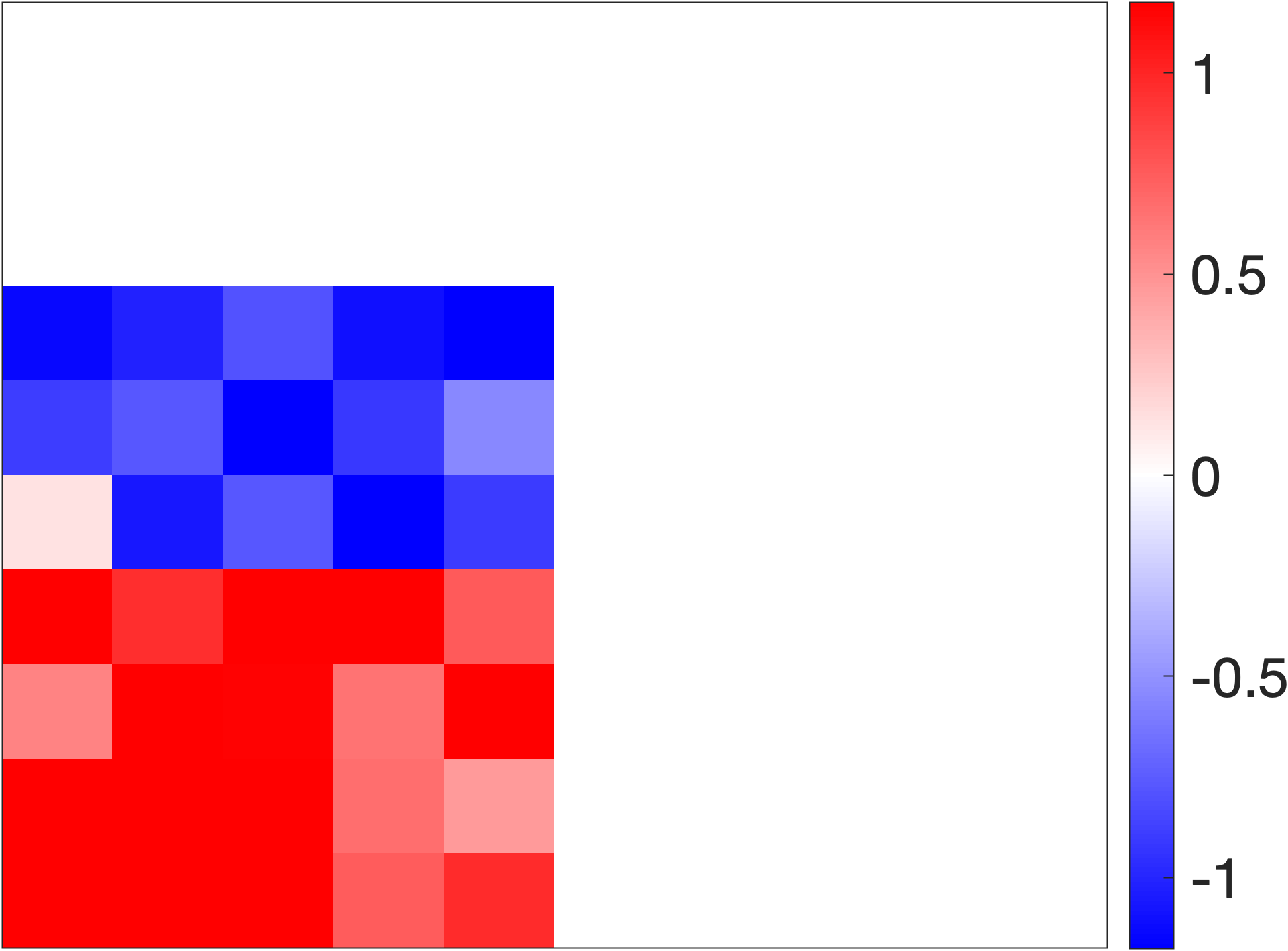}
\hspace{0.8cm}
\includegraphics[width=0.28\linewidth]{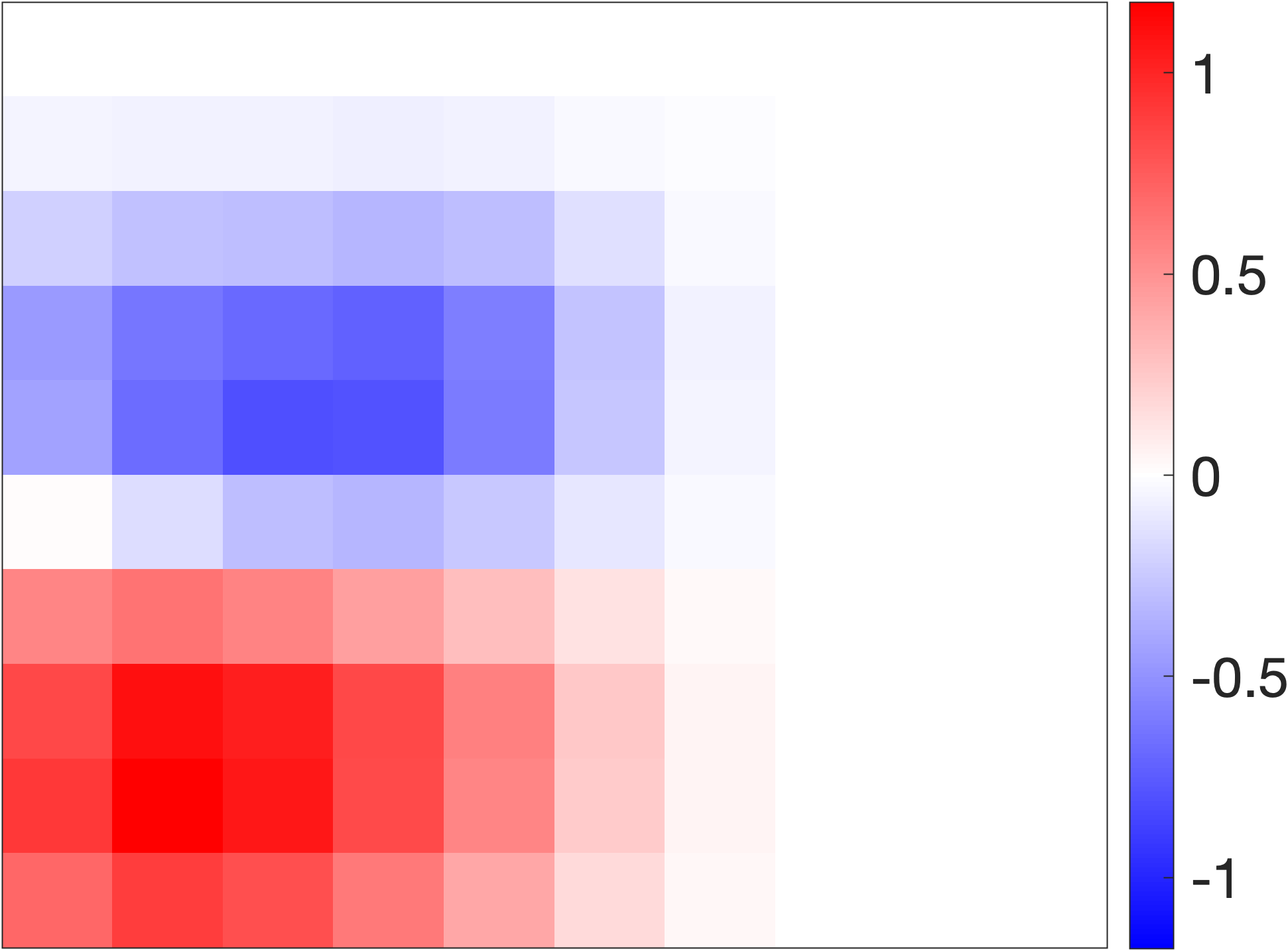}
\hspace{0.8cm}
\includegraphics[width=0.28\linewidth]{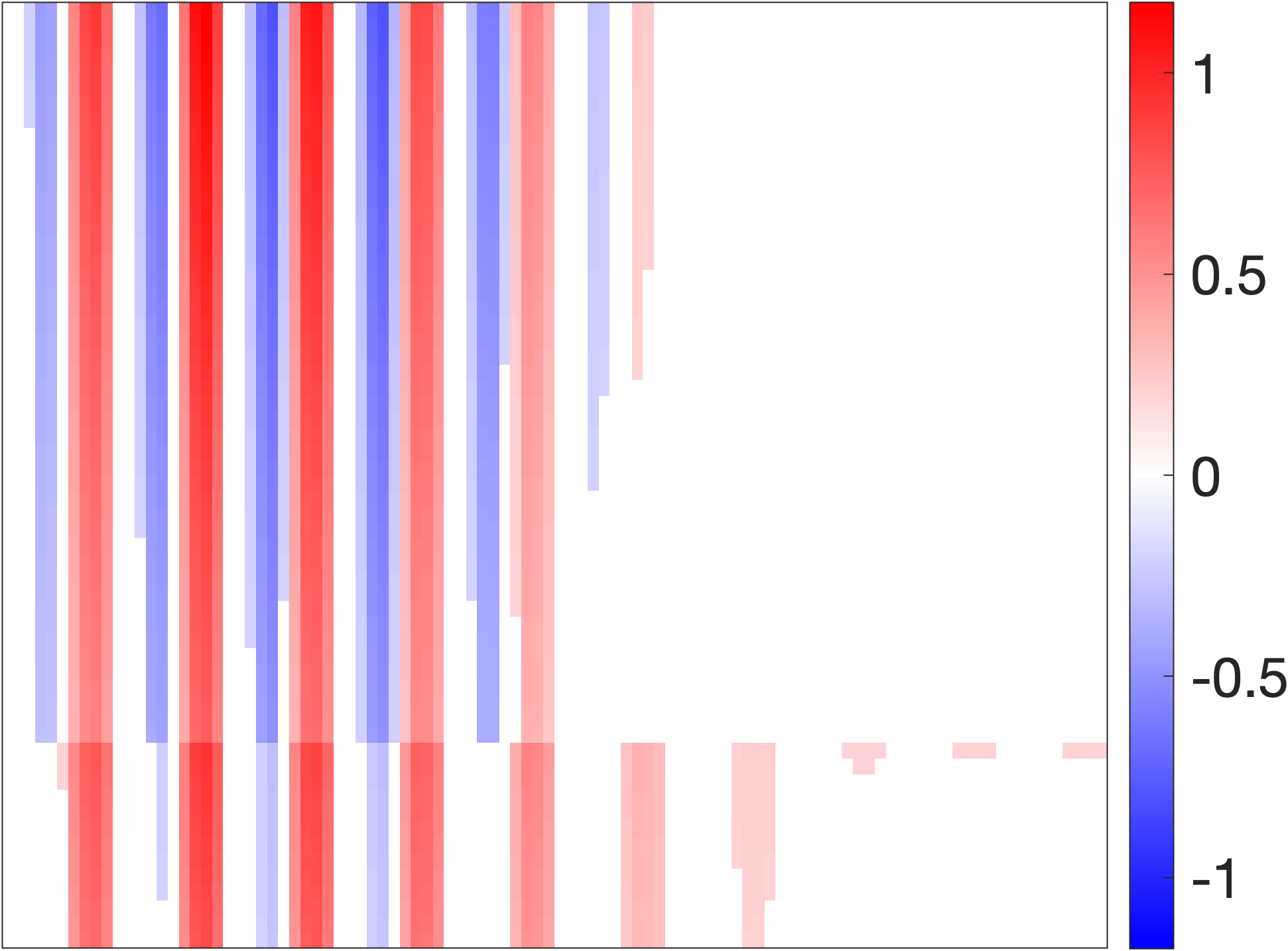}
\caption{Illustration of a \(10 \times 10\) spatial  grid for $S$ (left) and  $\widehat{S}$ (middle). Right: true coefficient matrix $\theta\in\mathbb R^{60\times100}$.
}
\label{fig: B1j}
\end{figure*}

\subsubsection{Data Generation}
\label{sec: data}
We start by detailing the data generation process, which incorporates two key characteristics: first, the $s$ spatial locations form a $\sqrt{s}\times \sqrt{s}$ grid where adjacent locations exhibit similar behavior; second, the temporal evolution is smooth, {interrupted by a single change point.}

We consider the single-response model \eqref{eq: model, single}, with noise terms generated independently as \(\varepsilon_k\overset{\mathrm{i.i.d.}}{\sim}\mathcal N(0, 10^{-4})\). The predictors $\{X_k\}_{k=1}^n$ are generated from a matrix-variate Gaussian distribution \cite{gupta1999matrix,greenewald2015robust_kronpca,tsiligkaridis2013kronecker_covariance}
as $
\mathrm{vec}(X_k)\overset{\mathrm{i.i.d.}}{\sim}\mathcal N \bigl(0,\Sigma_s\otimes\Sigma_t\bigr)
$, $k\in [n]$.
The temporal covariance {\(\Sigma_t\in\mathbb S^{t}\)} is defined by \((\Sigma_t)_{ii'}=0.9^{|i-i'|}\), while the spatial covariance is block-diagonal: \(\Sigma_s=\operatorname{diag}(\Sigma_0,\ldots,\Sigma_0) \in\mathbb S^{s}\) with \(\Sigma_0=0.4I+0.6\,\mathbf{1}\mathbf{1}^\top\in\mathbb S^{\sqrt{s}}\), where $\mathbf{1}\in\mathbb{R}^{\sqrt{s}}$ denotes the vector of all ones.
The true coefficient matrix $\theta\in \mathbb{R}^{t\times s}$ evolves over time from an initial state $\theta_{1\cdot}$ according to:
\[
\theta_{i\cdot}= 0.99\,\theta_{i-1\cdot} + 
\begin{cases}
0.2 & \text{if } i=t^\ast\\[2pt]
0 & \text{if }  i\neq t^\ast
\end{cases}, \quad \text{for}\ i = 2,\dots,t,
\]
where the change point $t^\ast$ is sampled uniformly from $\{3,\dots,t-2\}$. We construct \(\theta_{1\cdot}\) from a latent spatial matrix $S\in\mathbb{R}^{\sqrt{s}\times\sqrt{s}}$. 
The grid is  partitioned into six {roughly} equal regions ($3 \times 2$ layout). Entries in the middle-left and lower-left blocks are drawn from $\mathcal{N}(1, 0.1)$ and $\mathcal{N}(-1, 0.1)$, respectively, while the remaining four regions are set to zero (see the left panel of Fig.~\ref{fig: B1j}).
To avoid sharp spatial discontinuities, we apply Gaussian smoothing  (see Fig.~\ref{fig: B1j}, middle panel) to $S$ as
\[
\widehat{S}_{pq}= \frac{1}{2\pi} \sum_{p',q'}\exp\left({-\frac{(p-p')^2+(q-q')^2}{2}} \right){S}_{p'q'}.
\]
The initial coefficient is then defined as $\theta_{1\cdot} = \mathrm{vec}(\widehat{S})^\top$. After generating the full coefficient $\theta$ via the above temporal evolution, we threshold entries $\theta_{ij}$ with magnitude below $0.2$ to zero. The resulting true coefficient matrix $\theta$ is shown in the right panel of Fig.~\ref{fig: B1j}.

For each instance, we generate independent training, validation, and test sets with sample sizes $n$ (to be defined in the following subsections), $n_{\mathrm{val}}=10^3$, and $n_{\mathrm{test}}=10^3$, respectively.

\subsubsection{Evaluation Performance}
We compare our GGFL estimator \eqref{prob-single}, together with its simplified variant S-GGFL (which enforces $\lambda_t=\lambda_g$ in \eqref{prob-single} to reduce tuning time), against the state-of-the-art GTV estimator \cite{stevens2021graph} defined in \eqref{eq: gtv}. For all three methods, regularization parameters are selected by minimizing the validation root mean squared error (RMSE). {During the parameter tuning phase, each candidate configuration for GGFL and S-GGFL}  
is solved by applying Algorithm~\ref{alg: hpr-restart} with the stopping criterion $\eta_{\mathrm{KKT}} \le 10^{-3}$. 

To evaluate the performance of an estimator $\hat\theta$ of the true coefficient $\theta$, we report the following prediction and estimation errors:
\begin{align*}
&\mathrm{RMSE}\text{-}y
=\sqrt{ \|\mathcal X_{\rm test}(\hat\theta)-y_{\rm test} \|_2^2/n_{\rm test}},\quad
\mathrm{Error}\text{-}\theta
=\frac{\|\hat \theta-\theta\|_2}{1+\|\theta\|_2},
\end{align*}
where $\{(X_k^{\rm test},y_k^{\rm test})\}_{k=1}^{n_{\rm test}}$ denotes the test set and 
\[
\mathcal X_{\rm test}(\theta)
:=
[ \langle X^{\rm test}_1,\theta\rangle,\dots,\langle X^{\rm test}_{n_{\rm test}},\theta\rangle ]^\top.
\]

We fix dimensions $t=90,\, s=100$ and vary the training  sample size $n \in \{100, 200, 500, 1000\}$.
Regularization parameters are tuned over grids of five logarithmically spaced values:
$\lambda_t, \lambda_g \in [10^{-2}, 10^{2}]$, $\lambda_1 \in [10^{-4}, 10^{-2}]$ for GGFL/S-GGFL;
and $\lambda_1 \in [10^{-4}, 10^{-1}]$, $\lambda_{tv} \in [10^{-4}, 10^{-2}]$ for GTV.
Since GTV \eqref{eq: gtv} requires an estimated covariance matrix ${\Sigma}\in\mathbb{S}^{ts}_+$, we generate an additional $10^4$ independent samples to compute the sample covariance, applying entrywise thresholding at $0.5$ following \cite{stevens2021graph}.

\begin{table*}[htbp]
\centering
\setlength{\tabcolsep}{10pt} 
\renewcommand{\arraystretch}{1.2} 
\caption{Performance Comparison of Single-Task Estimators on Synthetic Data across Varying Training Sample Sizes $n$}
\label{tab:gtv-vs-ggfl}
\begin{tabular}{c l c c c c}
\toprule
$n$ & Estimator & RMSE-$y$ & Error-$\theta$ & Tuning Time & Final Fitting Time \\
\midrule
\multirow{3}{*}{100}
  & S-GGFL & 2.04e$+$01 & 3.12e$-$01 & 00:07:57 & 00:00:16 \\
  & GGFL   & 1.38e$+$01 & 2.12e$-$01 & 01:34:06 & 00:00:35 \\
  & GTV    & 7.70e$+$01 & 1.13e$+$02   & 02:09:07 & 00:02:34 \\
\midrule
\multirow{3}{*}{200}
  & S-GGFL & 9.17e$+$00 & 1.60e$-$01 & 00:10:00 & 00:00:18 \\
  & GGFL   & 5.88e$+$00 & 1.11e$-$01 & 00:59:40 & 00:00:27 \\
  & GTV    & 5.53e$+$01 & 1.19e$+$02   & 02:58:04 & 00:04:27 \\
\midrule
\multirow{3}{*}{500}
  & S-GGFL & 1.55e$+$00 & 5.32e$-$02 & 00:10:31 & 00:00:18 \\
  & GGFL   & 1.31e$+$00 & 4.85e$-$02 & 00:59:18 & 00:00:25 \\
  & GTV    & 1.26e$+$01 & 1.00e$+$02   & 04:24:30 & 00:01:56 \\
\midrule
\multirow{3}{*}{1000}
  & S-GGFL & 3.36e$-$01 & 2.22e$-$02 & 00:09:20 & 00:00:20 \\
  & GGFL   & 3.36e$-$01 & 2.22e$-$02 & 00:55:17 & 00:00:20 \\
  & GTV    & 6.30e$-$01 & 1.03e$+$02   & 10:02:41 & 00:06:31 \\
\bottomrule
\end{tabular}
\end{table*}

\begin{figure*}[htbp]
    \centering
    \includegraphics[width=\linewidth]{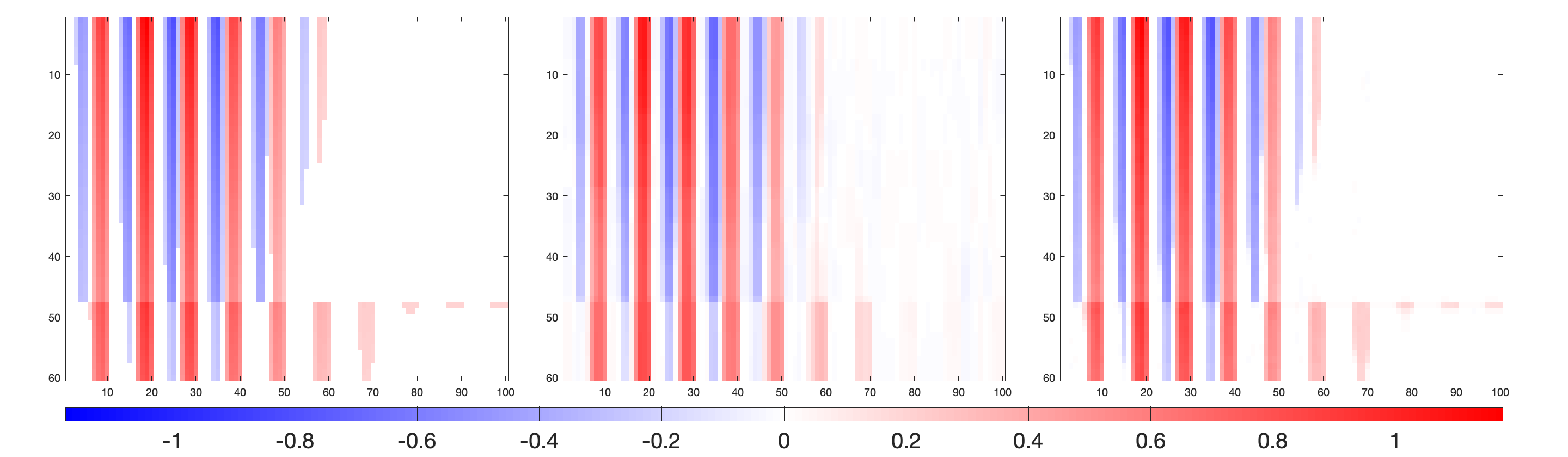}
    \caption{True coefficient matrix $\theta$ (left), and GGFL estimated coefficient matrices with sample sizes 
$n=200$ (middle) and 
$n=1000$ (right).}
    \label{fig:statistical performance single}
\end{figure*}

Table~\ref{tab:gtv-vs-ggfl} reports the test RMSE-$y$, Error-$\theta$, parameter tuning time, and final fitting time (using the {selected} optimal parameters) of three methods.
The results show that GGFL consistently achieves the lowest RMSE-$y$ and Error-$\theta$.
For example, at $n=500$, GGFL completes tuning and fitting in one hour and reaches RMSE-$y=1.31$ and Error-$\theta=4.85\times 10^{-2}$, whereas GTV requires over 4 hours, yielding {significantly} higher errors ($12.6$ and $1\times 10^{2}$).
The simplified variant S-GGFL also performs remarkably well. At $n=1000$, it requires only 10 minutes  to achieve the same {accuracy} 
as GGFL, far outperforming GTV. Notably, while GTV uses $10^4$ additional samples for covariance estimation, our estimators attain superior accuracy without this extra data, demonstrating that explicitly enforcing spatiotemporal structure offers a highly effective alternative to covariance-based modeling.

Computationally, S-GGFL and GGFL demonstrate superior efficiency and scalability.
As the sample size $n$ increases, the tuning time of GTV increases from about 2 hours to more than 10 hours, while S-GGFL consistently completes the tuning within 10 minutes and the tuning time of GGFL decreases from 2 hours to 55 minutes.
A similar trend is observed for fitting time: both GGFL and S-GGFL consistently finish within 1 minute, whereas GTV requires over 5 minutes at $n=1000$.
These results provide {strong}
evidence that our methods are both efficient and effective compared with GTV.

Furthermore, we visualize the effect of sample size on the GGFL estimator. As shown in Fig. \ref{fig:statistical performance single}, when $n=200$, the GGFL estimate shows non-negligible deviations in regions that are zero in the ground truth, reflecting limited-sample effects. In contrast, for $n=1000$, these deviations vanish and the estimated pattern closely matches the ground truth, demonstrating a substantial decrease in estimation error as the sample size grows.

Overall, S-GGFL offers a well-balanced trade-off between accuracy and efficiency, making it particularly attractive in practice.

\subsubsection{Computational Performance}
We assess the computational scalability of the HPR algorithm (Algorithm~\ref{alg: hpr-restart}) for solving \eqref{prob-single} as the temporal length $t$ and spatial dimension $s$ increase. We further evaluate its scalability in the multi-task setting \eqref{prob-multi} by varying the number of tasks $m$, along with $t$ and $s$. For the multi-task experiments, data are generated from the multi-task model \eqref{eq: model, multi}, where {the coefficient matrix $\Theta^{(r)}$ for} each task is constructed as Section \ref{sec: data} {with all tasks sharing}
a common {temporal} change point $t^*$.

We first evaluate the scalability of the HPR algorithm for solving the single-task GGFL model with sample size $n=500$ by varying the temporal length $t$ and spatial dimension $s$. Specifically, we consider $t \in [60, 240]$ with $s=100$ fixed, and $s \in [100, 400]$ with $t=60$ fixed. The results are shown in Fig.~\ref{fig: single-task scalability}. We then conduct analogous experiments for the multi-task setting (MultiGGFL) with $m=5$, varying $t$ and $s$ under the same settings, see the upper and middle panels of Fig.~\ref{fig: multi-task scalability}. We further assess the scalability with respect to the number of tasks by increasing $m \in [5, 25]$ while fixing $n=500, t=60, s=100$ (Fig.~\ref{fig: multi-task scalability}, lower panel). In all experiments, we test four regularization strengths $\lambda_0 \in \{10^{-3}, 10^{-2}, 10^{-1}, 1\}$, setting all penalty parameters equal to $\lambda_0$ (i.e., $\lambda_{i} = \lambda_0$, $i=1,2,t,g$).

\begin{figure}[htbp]
    \centering
\includegraphics[width=0.45\linewidth]{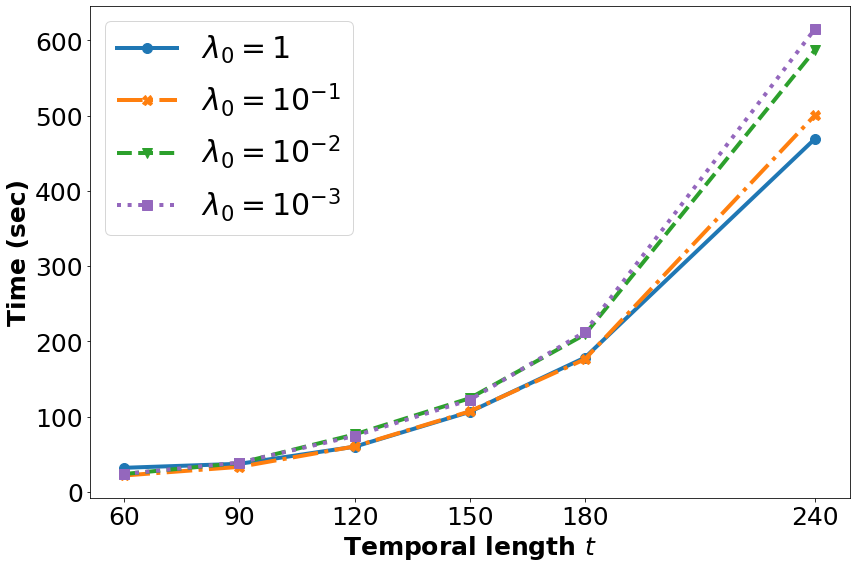}
\hspace{0.35cm}
\includegraphics[width=0.45\linewidth]{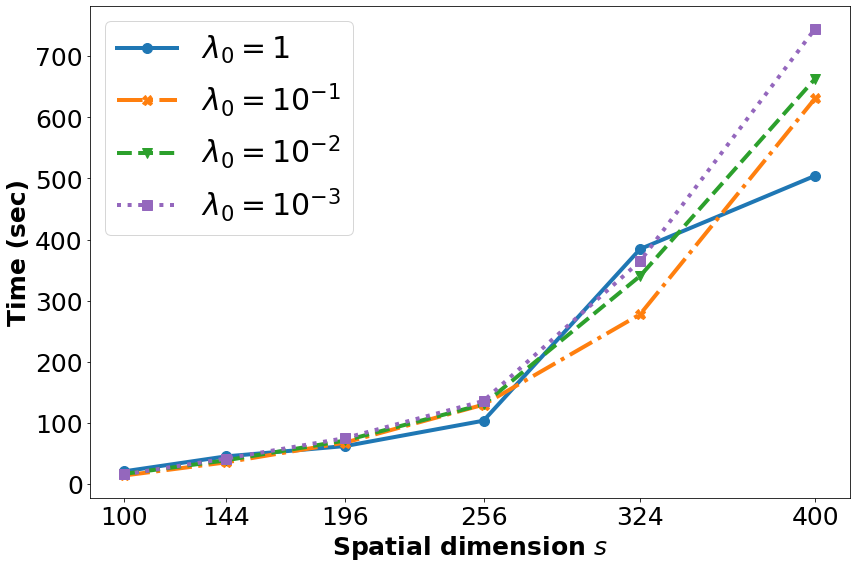}
\caption{Runtime of HPR  for GGFL  under varying regularization strengths $\lambda_0$. {Upper:} Scaling with temporal length $t$ (fixed $n=500, s=100$). {Lower:} Scaling with spatial dimension $s$ (fixed $n=500, t=60$). }
    \label{fig: single-task scalability}
\end{figure}

\begin{figure}[htbp]
    \centering
\includegraphics[width=0.32\linewidth]{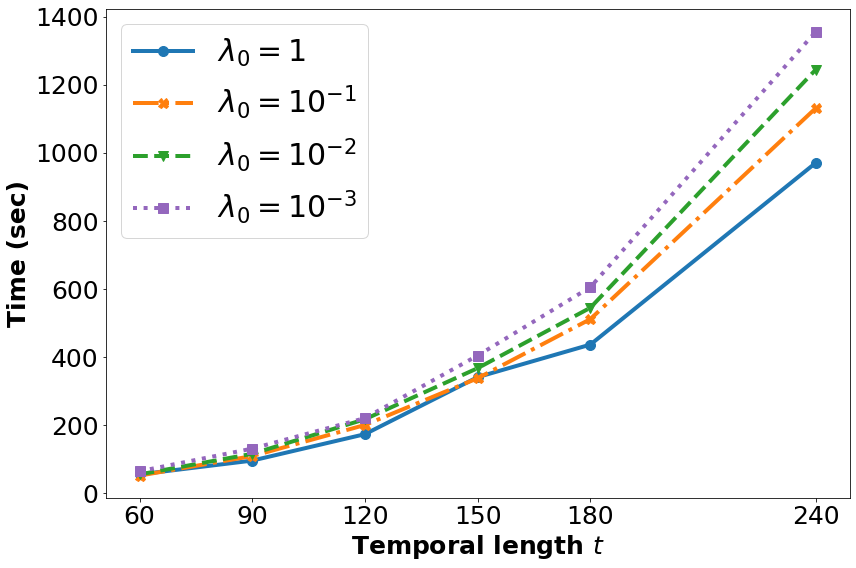}
% \hspace{0.35cm}
\includegraphics[width=0.32\linewidth]{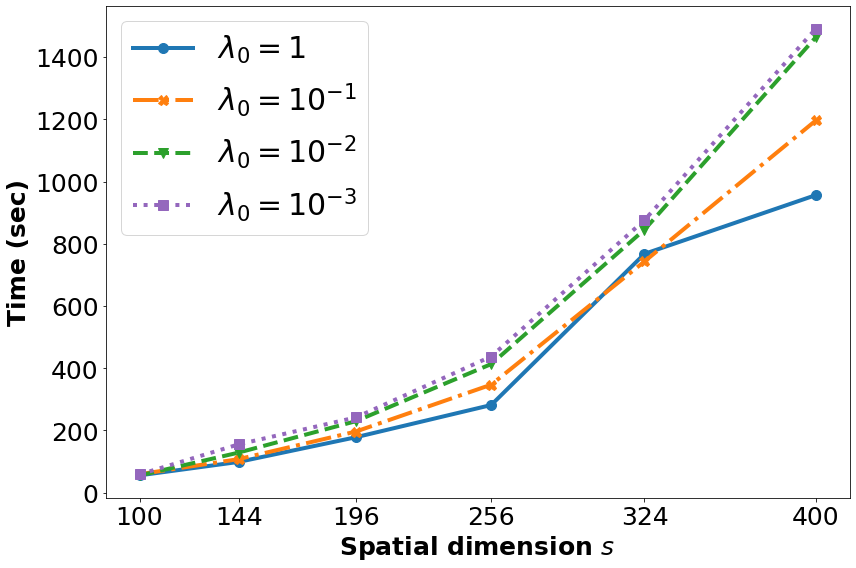}
% \hspace{0.2cm}
\includegraphics[width=0.32\linewidth]{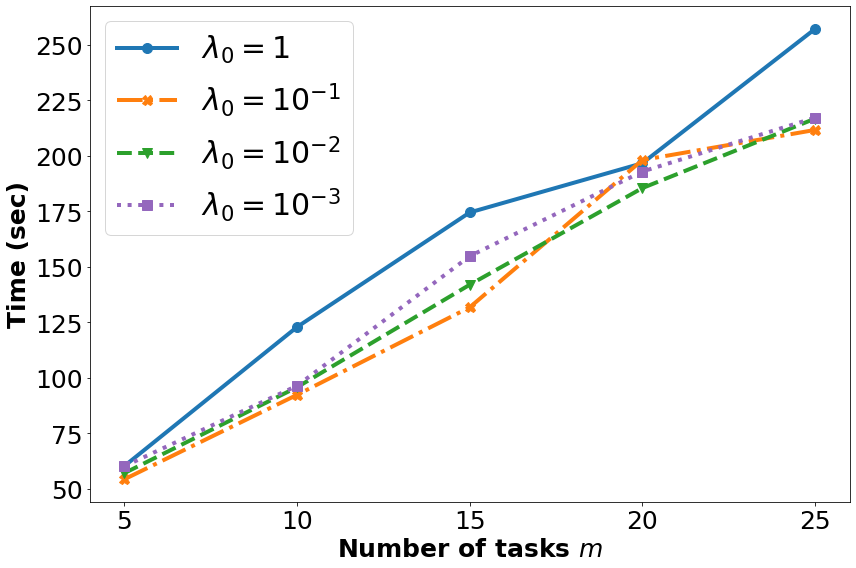}
\caption{Runtime of HPR for MultiGGFL under varying regularization strengths $\lambda_0$. Upper: Scaling with temporal length $t$ (fixed $n=500, s=100, m=5$). Middle: Scaling with spatial dimension $s$ (fixed $n=500, t=60, m=5$). Lower: Scaling with number of tasks $m$ (fixed $n=500, t=60, s=100$).} 
    \label{fig: multi-task scalability}
\end{figure}

We can see from the figures that the computational time scales approximately linearly with $m$ and at most quadratically with $t$ and $s$, consistent with the complexity analysis in Section~\ref{sec: updata_complexity}.
Notably, the performance is generally insensitive to regularization strength, with negligible runtime differences across different $\lambda_0$.
These findings confirm that the HPR algorithm is robust and stable   across varying problem dimensions and regularization strengths.

\subsection{SWUS Precipitation Forecasting: Single-Task}

\begin{figure}[htbp]
\centering
{\includegraphics[width=0.3\linewidth]{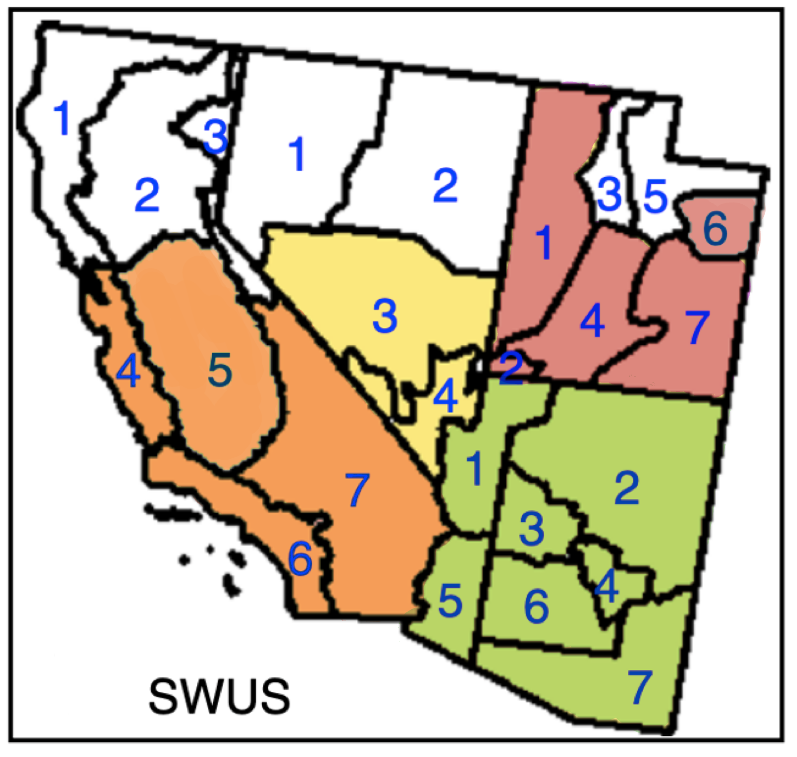}}
\quad
{\includegraphics[width=0.3\linewidth]{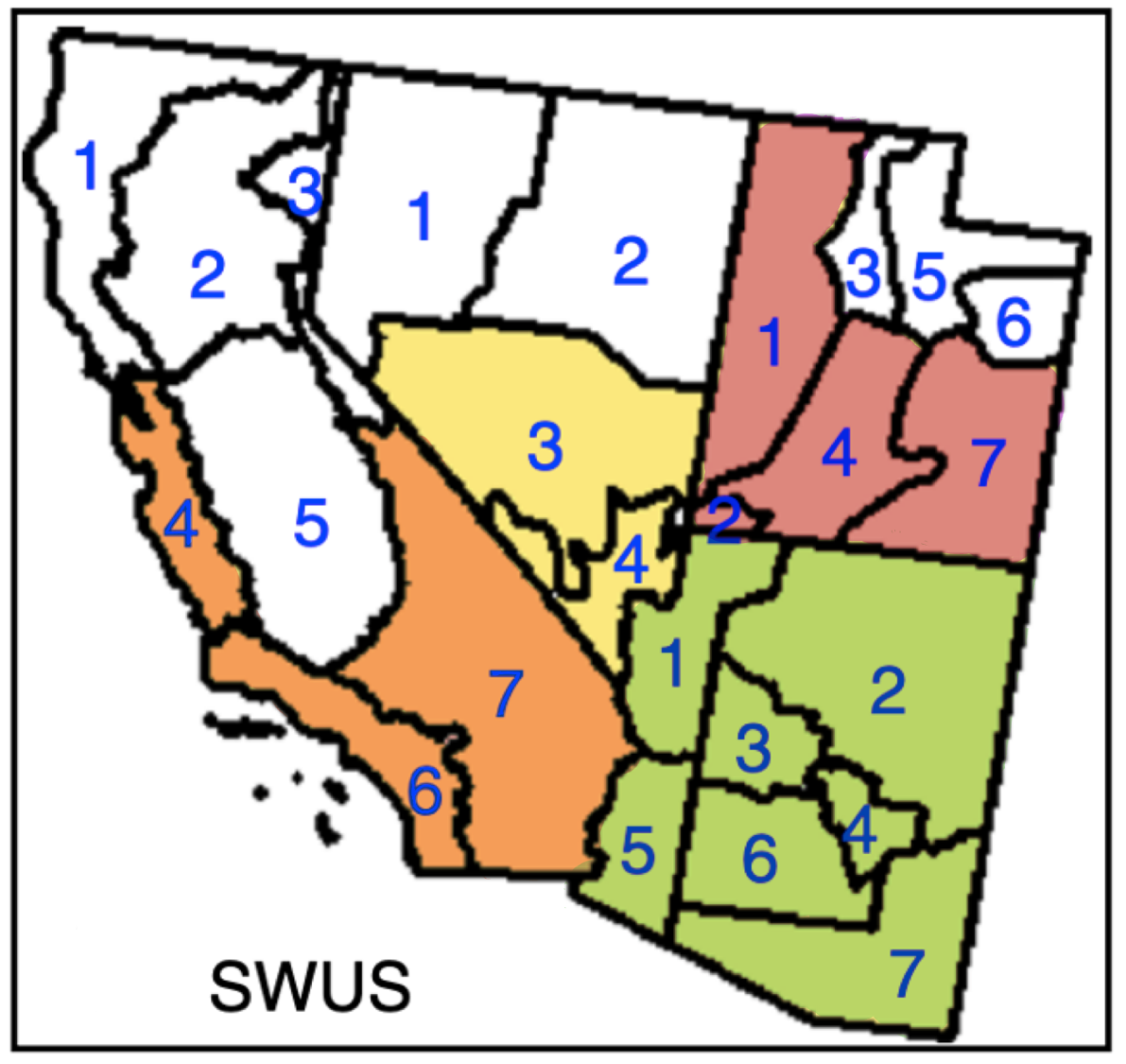}}
\caption{Illustration of selected SWUS climate divisions. 
Left: Divisions used to compute the area‐weighted average winter precipitation response.  
Right: The 16 distinct  divisions, each corresponding to a separate task in the multi-task setting.
Regions are color‐coded by state: California (orange), Nevada (yellow), Utah (red), and Arizona (green). 
}
\label{fig:swus}
\end{figure}

We evaluate the GGFL \eqref{prob-single} and GTV \eqref{eq: gtv} models {for} forecasting winter (November--March) precipitation in the southwestern United States (SWUS). Following the setup in \cite{stevens2021graph}, the dataset covers the period 1940--2018.
For each year $k$, the response $y_k$ is the area-weighted average winter precipitation over selected SWUS climate divisions ({see} Fig. \ref{fig:swus}, left panel).
The predictor $X_k \in \mathbb{R}^{t \times s}$ consists of sea surface temperatures (SST) from the preceding $t=4$ months (July--October) across $s=220$ spatial locations on a $10^\circ\times10^\circ$ grid over the Pacific ocean.
In our experiment, we use preprocessed data  from the GTV package\footnote{\url{https://github.com/Willett-Group/gtv_forecasting}}.

We partition the data into training (1940--1989, $n=50$) and testing (1990--2018, $n_{\mathrm{test}}=29$) sets. During training, we replace the {standard squared $\ell_2$-loss}
in \eqref{prob-single} and \eqref{eq: gtv} by a temporally weighted {least squares}
to prioritize recent observations:
\begin{equation}\label{eq:timeweight}
 \bigl\|D^{\frac{1}{2}}\bigl(y-\mathcal{X}(\theta)\bigr)\bigr\|_2^2 , \mbox{ with }
D=\mathrm{diag}\bigl(\{0.9^{\,1989-k}\}_{k=1940}^{1989}\bigr).
\end{equation}

Parameters are selected via five-fold cross-validation on the training set. We tune $\lambda_1 \in [10^{-4}, 10^{-2}]$ and $\lambda_{t}=\lambda_{g} \in [10^{-3}, 10^{2}]$ for GGFL, and $\lambda_1 \in [10^{-4}, 10^{-1}]$ and $\lambda_{tv} \in [10^{-4}, 10^{-2}]$ for GTV, using 20 logarithmically spaced values for each parameter. This parameter tuning for GTV covers a broader and finer range than that used in the GTV package \cite{stevens2021graph}.

Since GTV requires a covariance estimate ${\Sigma} \in \mathbb{S}^{ts}$, we evaluate two variants:
 {GTV(Obs)}, using the sample covariance computed directly from the training predictors; and
 {GTV(LENS)}, using external data from the CESM Large Ensemble dataset \cite{kay2015community} following \cite{stevens2021graph}.
Both variants apply a threshold of $0.5$ to ${\Sigma}$ to control graph sparsity.

\begin{table}[H]
\centering
\caption{Forecasting Comparison for SWUS Winter Precipitation
}
\label{tab:swus_single_task}
\setlength{\tabcolsep}{10pt}
\renewcommand{\arraystretch}{1.2}
\begin{tabular}{l c c c}
\toprule
Estimator & Test RMSE-$y$ & Tuning Time & Fitting Time \\
\midrule
GGFL            & 0.904 & 00:12:00 & 00:00:01 \\
GTV(LENS) & 0.983 & 00:43:49 & 00:00:02 \\
GTV(Obs)       & 1.076 & 01:00:24 & 00:00:02 \\
\bottomrule
\end{tabular}
\end{table}

Table~\ref{tab:swus_single_task} summarizes the  forecasting results.  As can be seen, GGFL achieves the lowest test RMSE-$y$ (0.904),  outperforming GTV(LENS) (0.983) and GTV(Obs) (1.076).
{We note that \cite{stevens2021graph} reported a lower RMSE-$y$ of 0.860 for GTV(LENS), achieved via {a specific selection of hyperparameter candidates.} 
This discrepancy likely stems from the sensitivity of GTV to hyperparameter choices in limited-data regimes.}
Consistent with \cite{stevens2021graph}, GTV(LENS) outperforms GTV(Obs), confirming that incorporating external  data improves covariance estimation.
Computationally, GGFL is significantly more efficient, completing tuning in 12 minutes compared to 44 minutes for GTV(LENS) and 1 hour for GTV(Obs). 
These results show that GGFL consistently outperforms GTV in both predictive accuracy and computational efficiency.

\subsection{SWUS Precipitation Forecasting: Multi-Task}
{For the multi-task experiments, w}e evaluate the performance under both joint (multi-task) and independent (single-task) settings.
We compare MultiGGFL \eqref{prob-multi} against MultiGTV \cite[(3)]{stevens2019graph}, which minimizes:
\begin{align*}
&\min_{\Theta\in \mathbb{R}^{t\times s\times m}} \ \sum_{r=1}^m\Bigl[
\|y^{(r)}-\mathcal{X}(\Theta^{(r)})\|_2^2
+ \lambda_1 \|\Theta^{(r)}\|_1 + \lambda_{1} \sum_{(i,j),(i',j')} |{\Sigma}_{(i,j),(i',j')} |^{\frac{1}{2}} \big|\Theta_{ij}^{(r)} - {s}_{(ij),(i'j')}\Theta^{(r)}_{i'j'}\big| \Bigr] \\
&\quad\quad\quad\quad\quad \quad+ \lambda_2 \sum_{i=1}^t \sum_{j=1}^s \|\Theta_{[ij]}\|_2.
\end{align*}
In the single-task setting, GGFL \eqref{prob-single} and GTV \eqref{eq: gtv} are fitted independently to each {task.}

{In this setup, w}e define $m=16$ tasks corresponding to 16 SWUS climate divisions shown in the right penal of Fig.~\ref{fig:swus}.
The response $y_k^{(r)}$ is the winter precipitation for division $r$, sourced from NOAA\footnote{\url{https://www.ncei.noaa.gov/pub/data/cirs/climdiv/} (dataset \texttt{climdiv-pcpndv-v1.0.0-20250905})}, as divisional precipitation data are not included in the GTV package. Predictors $X_k$ remain identical to the single-task setting.
{Data partitioning and the temporal weighting scheme of the loss also follow the protocol described in the previous subsection.}

Parameters are tuned via five-fold cross-validation.
For MultiGGFL, we select $\lambda_1=\lambda_2$ from 20 logarithmically spaced values in $[10^{-4},10^{-2}]$, and $\lambda_t=\lambda_g$ from 20 values in $[10^{-3},10^{2}]$.
For MultiGTV, $\lambda_1$ and $\lambda_2$ are tuned over 10 logarithmically spaced values in $[10^{-3},10^{2}]$. Due to the high computational cost of MultiGTV, we restrict the grid size and cap the runtime at 1800 seconds per instance.

\begin{figure*}[htbp]
\centering
\includegraphics[width=0.95\linewidth]{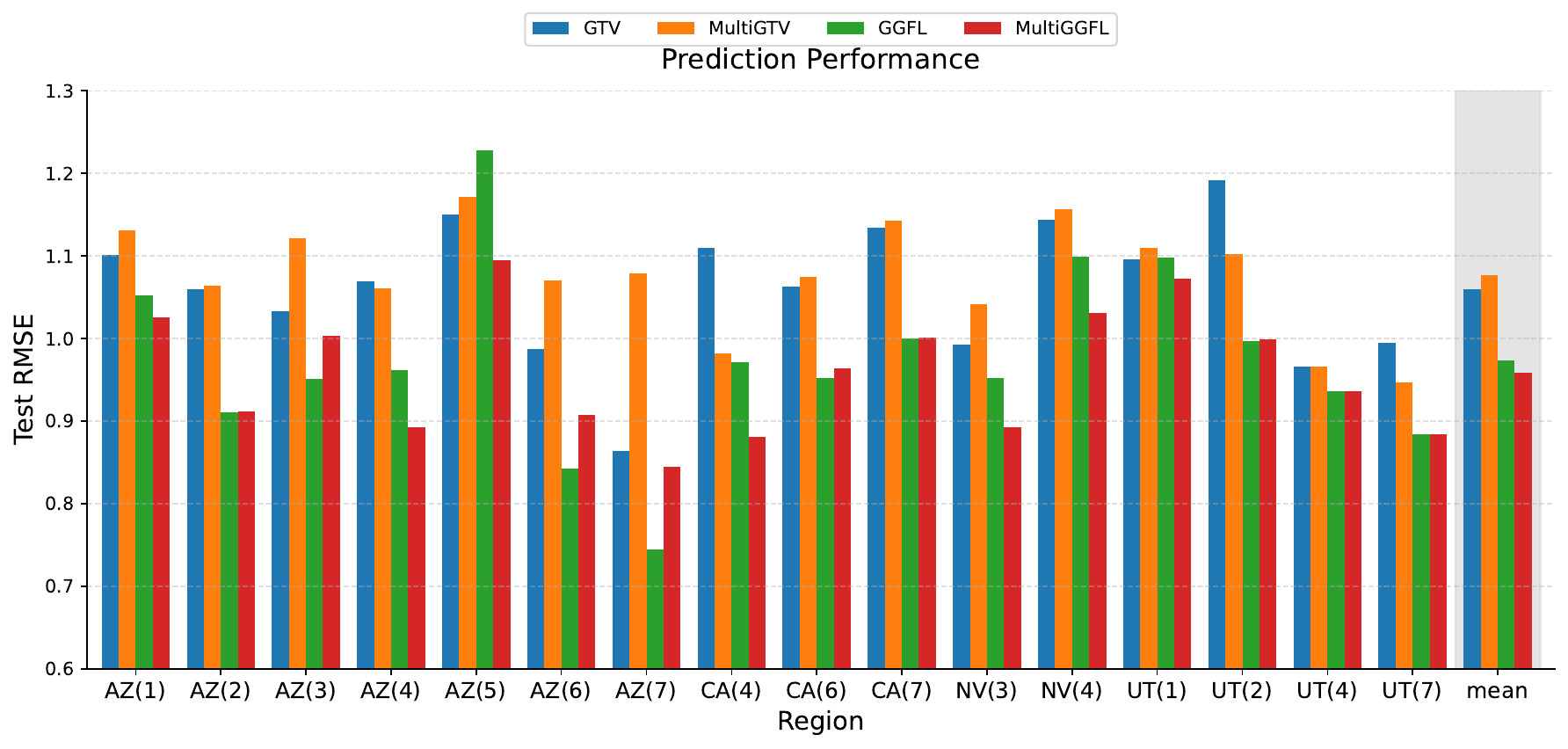}
\caption{Forecasting comparison for SWUS winter precipitation across 16 divisions. The rightmost bar denotes the mean across all divisions. 
Computational times (Tuning / Fitting): GTV (10 hours / 2 minutes), MultiGTV (72 hours / 30 minutes), GGFL (3 hours / 20 seconds), and MultiGGFL (3 hours / 14 seconds).
}
\label{fig:real_data_multitask_obs}
\end{figure*}

Fig.~\ref{fig:real_data_multitask_obs} reports test RMSE-$y$ by division for all four models. 
The two GGFL variants achieve consistently lower test errors than their GTV counterparts, both across most divisions and on average. This highlights the advantage of  explicitly enforcing temporal smoothness across consecutive time points and spatial similarity among adjacent regions.
Although GGFL performs competitively overall, it exhibits higher variance across divisions. For instance, it achieves the best results in AZ(6) and AZ(7), but performs the worst in AZ(5). In contrast,  MultiGGFL attains the lowest mean RMSE-$y$ and exhibits smaller variability, showing that jointly learning across related tasks enhances both stability and accuracy.
MultiGTV also improves upon GTV via task coupling but remains inferior to MultiGGFL.
Computationally, both GGFL and MultiGGFL complete tuning and fitting within 3 hours. In contrast, GTV requires nearly 10 hours, and MultiGTV spans nearly 3 days.
These results demonstrate that the proposed framework provides robust and efficient precipitation forecasting, with the multi-task formulation offering the greatest overall gains.

\section{Conclusion}
\label{sec: conclusion}
We present a unified framework for spatiotemporal matrix regression in single- and multi-task settings. Our GGFL {estimator leverages} 
a weighted spatial graph to couple neighboring locations while simultaneously {enforces}
temporal smoothness and {feature} sparsity{. Building on this,} 
the multi-task extension further aligns structure across related tasks. {On the theoretical front, we establish $\sqrt{n}$-consistency for our estimators.}  
On the computational side, we employ the HPR algorithm for solving the resulting {composite convex}
problem with a non-ergodic $\mathcal{O}(1/k)$ rate and low per-iteration complexity, making the approach practical at scale.  
{Numerical results indicate that} 
the proposed methods show superior predictive performance and estimation accuracy compared with the state-of-the-art GTV estimators,  while maintaining strong efficiency and scalability. {These findings validate that} 
our estimators offer a robust and interpretable route to high-dimensional spatiotemporal learning.

{Several directions need further investigation. First, our framework could incorporate} 
robust or generalized losses {(e.g., Huber, logistic, or Poisson), allowing for the handling of} 
heavy tails, outliers, {and categorical data for classification.}
{Second, rather than pre-specifying structures, adaptively learning spatiotemporal graphs from data \cite{lin2024dnnlasso} presents a promising avenue. Finally, exploring second-order information may further accelerate computation, offering a more efficient alternative for large-scale problems.}

\bibliographystyle{ieeetr}
\bibliography{references}
\end{document}